\DeclareMathOperator{\vol}{vol}
\DeclareMathOperator{\PFP}{PFP}
\DeclareMathOperator{\FP}{FP}
\DeclareMathOperator{\supp}{supp}
\theoremstyle{definition}\newtheorem{definition}{Definition}[section]\newtheorem{example}[definition]{Example}\newtheorem{remark}[definition]{Remark}
\theoremstyle{plain}\newtheorem{theorem}[definition]{Theorem}\newtheorem{lemma}[definition]{Lemma}\newtheorem{corollary}[definition]{Corollary}\newtheorem{proposition}[definition]{Proposition}
\newcommand{\R}{\mathbb{R}}\newcommand{\C}{\mathbb{C}}
\begin{document}
\title{Random Tight Frames}

%

\author{M.~Ehler}
\date{}

\address{National Institutes of Health, National Institute of Child Health and Human Development, Section on Medical Biophysics, Bethesda, MD 20892}

\email{ehlermar@mail.nih.gov}

\address{University of Maryland, Department of Mathematics, Norbert Wiener Center, College Park, MD 20742}
\email{ehlermar@math.umd.edu}


\keywords{frames, probability, optimal configurations}

\subjclass[2010]{42C15}

\begin{abstract}
We introduce probabilistic frames to study finite frames whose elements are chosen at random. While finite tight frames generalize orthonormal bases by allowing redundancy, independent, uniformly distributed points on the sphere approximately form a finite unit norm tight frame (FUNTF). In the present paper, we develop probabilistic versions of tight frames and FUNTFs to significantly weaken the requirements on the random choice of points to obtain an approximate finite tight frame. Namely, points can be chosen from any probabilistic tight frame, they do not have to be identically distributed, nor have unit norm. We also observe that classes of random matrices used in compressed sensing are induced by probabilistic tight frames. 
\end{abstract}

\maketitle








\section{Introduction}
Frames are basis-like systems that span a vector space but allow for linear dependency, which can be used to reduce noise, find sparse representations, or obtain other desirable features unavailable with orthonormal bases. They have proven useful in fields like spherical codes, compressed sensing, signal processing, and wavelet analysis \cite{Casazza:2003aa,Christensen:2003aa,Daubechies:1986aa,Ehler:2007aa,Ehler:ab,Ehler:2010ae,Ehler:2008ab,Ehler:aa,Goyal:2001aa,Feichtinger:2003aa,Grochenig:2001aa,Shen:2006aa}. Tight frames even provide a Parseval type formula similar to orthonormal bases. However, characterizations and constructions of finite tight frames and finite unit norm tight frames (FUNTFs) were needed \cite{Casazza:2003aa}. A general characterization of all FUNTFs was given by Benedetto and Fickus in \cite{Benedetto:2003aa}, where they proved that the FUNTFs are exactly the minimizers of a functional called the frame potential.  This was extended to finite tight frames in \cite{Waldron:2003aa}. Casazza and Fickus have considered the frame potential in the framework of fusion frames \cite{Casazza:2009aa}. To approximate a FUNTF, Goyal, Vetterli, and Thao considered in \cite{Goyal:1998aa} $n$ random points on the sphere. In fact, they showed that  independent, identically distributed (i.i.d.) points according to the uniform distribution on the sphere asymptotically (as $n\rightarrow \infty$) become a FUNTF.

The present paper is concerned with frames in a probabilistic setting and the generalization of the results of Goyal, Vetterli, and Thao. Our aim is to allow for a more flexible choice of $n$ points while still preserving the asymptotical tight frame property. We first introduce probabilistic frames and adopt many concepts and properties from finite frames  to the probabilistic setting. Probabilistic versions of frames, tight frames, Parseval frames, and FUNTFs are developed. After observing that the uniform distribution on the sphere is a probabilistic unit norm tight frame, we extend the results about the random choice of $n$ points on the sphere as follows: in comparison to \cite{Goyal:1998aa}, we are not limited to the uniform distribution and allow for any probabilistic tight frame. Moreover, the points do not have to be identically distributed nor must they lie on a sphere. This means a significant weakening of the assumptions in \cite{Goyal:1998aa} and offers much more flexibility. We use this extension to observe that Bernoulli, Gaussian, and sub-Gaussian random matrices, which are used in compressed sensing, fit into this scheme by choosing their rows according to probabilistic tight frames.

To better understand probabilistic tight frames, we minimize the frame potential as introduced by Benedetto and Fickus within a probabilistic setting. In fact, we characterize probabilistic tight frames as minimizers of the probabilistic frame potential, which also generalizes \cite{Waldron:2003aa}. Relations to spherical $t$-designs \cite{Delsarte:1977aa,Seidel:2001aa} are also discussed.

The outline is as follows: In Section \ref{section:frame potential}, we recall finite frames, the frame potential, and the characterization of its minimizers as derived by Benedetto and Fickus. We also recall the results of Goyal, Vetterli, and Thao about the random choice of $n$ points on the sphere. Section \ref{section:characterization} is dedicated to studying probabilistic frames that are introduced in Section \ref{section:probabilistic unit norm tight frames}. Well-known properties from finite frames are adopted to the probabilistic setting, and we define and study probabilistic tight frames. We then generalize the results of Goyal, Vetterli, and Thao in Section \ref{section:sampling}. In Section \ref{section:random points}, we study the probabilistic frame potential. We show in Section \ref{section:introducing PFP} that its minimizers are the probabilistic tight frames, and the relations to spherical $t$-designs are addressed in Section \ref{section:spherical 2-designs}. Conclusions are given in Section \ref{section:conclusions}.


\section{Background}\label{section:frame potential}
A collection of points $\{x_i\}_{i=1}^n\subset\R^d$ is called a \emph{finite frame for $\R^d$} if there are two constants $0<A\leq B$ such that
\begin{equation}\label{eq:frame ineq}
A\|x\|^2 \leq \sum_{i=1}^n |\langle x,x_i\rangle|^2 \leq B\|x\|^2,\quad\text{for all $x\in\R^d$.}
\end{equation}
The constants $A$ and $B$ are called \emph{lower and upper frame bounds}, respectively. In fact, finite frames are the finite spanning sets \cite{Christensen:2003aa}: 
\begin{lemma}\label{lemma:frame if and only if spans}
The sequence $\{x_i\}_{i=1}^n\subset\R^d$ is a finite frame for $\R^d$ if and only if it spans $\R^d$.
\end{lemma}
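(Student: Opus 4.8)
The plan is to establish the two implications separately, using the lower frame bound for one direction and a compactness argument on the unit sphere for the other. For the ``only if'' direction I would argue directly from \eqref{eq:frame ineq}: if $x\in\R^d$ is orthogonal to every $x_i$, then the left-hand inequality gives $A\norm{x}^2\le 0$, and since $A>0$ this forces $x=0$. Hence the orthogonal complement of $\spann\{x_i\}_{i=1}^n$ is trivial, so $\spann\{x_i\}_{i=1}^n=\R^d$, i.e.\ the sequence spans.

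For the ``if'' direction, assume $\{x_i\}_{i=1}^n$ spans $\R^d$ and consider $f(x)=\sum_{i=1}^n\abs{\langle x,x_i\rangle}^2$. This is a nonnegative quadratic form, hence continuous on $\R^d$, and it is homogeneous of degree $2$, meaning $f(\lambda x)=\lambda^2 f(x)$. Restricting $f$ to the unit sphere $\{x\in\R^d:\norm{x}=1\}$, which is compact, $f$ attains a minimum $A$ and a maximum $B$ there. For any $x\neq 0$, writing $x=\norm{x}\,(x/\norm{x})$ and invoking homogeneity yields $A\norm{x}^2\le f(x)\le B\norm{x}^2$, and the inequality is trivial when $x=0$; thus \eqref{eq:frame ineq} holds with these constants.

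It remains to verify $0<A$, and this is exactly where spanning is used. If $A=0$, then, being attained, there is a unit vector $x_0$ with $f(x_0)=0$, so $\langle x_0,x_i\rangle=0$ for all $i$; hence $x_0$ is orthogonal to $\spann\{x_i\}_{i=1}^n=\R^d$, in particular $x_0\perp x_0$, contradicting $\norm{x_0}=1$. Therefore $A>0$ and $\{x_i\}_{i=1}^n$ is a finite frame. (Alternatively, the upper bound needs no compactness at all: Cauchy--Schwarz gives $f(x)\le\bigl(\sum_{i=1}^n\norm{x_i}^2\bigr)\norm{x}^2$, so one may simply take $B=\sum_{i=1}^n\norm{x_i}^2$.)

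I do not expect a genuine obstacle here. The only point requiring care is the strict positivity of the lower bound $A$, which is precisely the content of the spanning hypothesis (equivalently, the triviality of the common orthogonal complement of the $x_i$). Everything else is the routine ``a continuous function on a compact set attains its extrema'' argument, combined with the degree-$2$ homogeneity that transports the inequalities from the sphere to all of $\R^d$.
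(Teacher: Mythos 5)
Your proof is correct and follows the standard argument: the paper itself only cites Christensen for this lemma, and the argument there (as well as the paper's own proof of the probabilistic analogue for bounded $K$) is exactly your route -- the lower bound forces the orthogonal complement of $\spann\{x_i\}_{i=1}^n$ to be trivial, and conversely Cauchy--Schwarz gives the upper bound while continuity and compactness of $S^{d-1}$ together with the spanning hypothesis yield a strictly positive lower bound, transported to all of $\R^d$ by degree-$2$ homogeneity.
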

The frame property can also be expressed by means of operators. Given a collection of $n$ points $\{x_i\}_{i=1}^n$ in $\R^d$, we call 
 \begin{equation*}
 F: \R^d \rightarrow \R^n, \quad x\mapsto \big(\langle x,x_i \rangle\big)_{i=1}^n
 \end{equation*}
the \emph{analysis operator}. Its adjoint operator 
 \begin{equation*}
  F^*: \R^n \rightarrow\R^d , \quad (c_i)_{i=1}^n\mapsto \sum_{i=1}^n c_i x_i
 \end{equation*}
is called the \emph{synthesis operator}. If the collection $\{x_i\}_{i=1}^n$ is a finite frame for $\R^d$, then the \emph{frame operator} $S=F^* F$ is positive, self-adjoint, and invertible \cite{Christensen:2003aa}. In this case, the following reconstruction formula holds,
\begin{equation}\label{eq:reconstruction for frames}
x = \sum_{j=1}^n  \langle S^{-1} x_i,x\rangle x_i =  \sum_{j=1}^n  \langle  x_i,x\rangle S^{-1}x_i ,\text{ for all $x\in\R^d$,}
\end{equation}
and $\{S^{-1} x_i\}_{i=1}^n$, in fact, is a frame too, called the \emph{canonical dual frame}.

Frames, whose lower and upper frame bounds coincide, play a special role, and we call a collection of points $\{x_i\}_{i=1}^n\subset\R^d$ a \emph{finite tight frame for $\R^d$} if there is a positive constant $A$ such that 
\begin{equation}\label{eq:tightness}
A\|x\|^2 = \sum_{i=1}^n |\langle x,x_i\rangle|^2,\quad\text{for all $x\in\R^d$.}
\end{equation}
The constant $A$ is called the \emph{tight frame bound}. Note that every finite tight frame gives rise to the expansion
\begin{equation}\label{eq:tight expansion}
x = \frac{1}{A}\sum_{i=1}^n \langle x,x_i\rangle x_i,\quad\text{for all $x\in\R^d$.}
\end{equation}
In this sense they are a generalization of orthonormal bases. The following lemma summarizes the standard characterizations of tight frames, cf.~\cite{Christensen:2003aa}:
\begin{lemma}\label{lemma:equivalent tight frame conditions discrete}
Let $\{x_i\}_{i=1}^n$ be a collection of vectors in $\R^d$, and let $A$ be a positive constant. The following points are equivalent:
\begin{itemize}
\item[\textnormal{(i)}]  $\{x_i\}_{i=1}^n$ is a finite tight frame for $\R^d$ with frame bound $A$,
\item[\textnormal{(ii)}] $F^* F=A\mathcal{I}_d$,
\item[\textnormal{(iii)}] Equation \eqref{eq:tight expansion} holds. 
\end{itemize}
\end{lemma}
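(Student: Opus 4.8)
The plan is to establish the equivalences by simply unwinding the definitions of the analysis operator $F$ and synthesis operator $F^*$, the only genuine ingredient being a polarization argument that upgrades an identity of quadratic forms into an identity of operators. First I would record two elementary computations that hold for every $x\in\R^d$ directly from the definitions: on the one hand,
\begin{equation*}
\sum_{i=1}^n |\langle x,x_i\rangle|^2 = \sum_{i=1}^n \langle x,x_i\rangle\langle x_i,x\rangle = \langle Fx,Fx\rangle = \langle F^*Fx,x\rangle,
\end{equation*}
and on the other hand $F^*Fx = \sum_{i=1}^n \langle x,x_i\rangle x_i$. With these in hand the lemma becomes almost a tautology.

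For \textnormal{(i)}$\Leftrightarrow$\textnormal{(ii)}: the tightness condition \eqref{eq:tightness} reads $\langle F^*Fx,x\rangle = A\|x\|^2 = \langle A\mathcal{I}_d x,x\rangle$ for all $x$, i.e.\ $\langle (F^*F-A\mathcal{I}_d)x,x\rangle = 0$ for all $x\in\R^d$. Since $F^*F$ is self-adjoint, so is $T:=F^*F-A\mathcal{I}_d$, and then the polarization identity $4\langle Tx,y\rangle = \langle T(x+y),x+y\rangle - \langle T(x-y),x-y\rangle$ forces $\langle Tx,y\rangle = 0$ for all $x,y$, hence $T=0$, which is \textnormal{(ii)}; the converse direction is obtained by reading the same chain of equalities backwards. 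For \textnormal{(ii)}$\Leftrightarrow$\textnormal{(iii)}: using $F^*Fx = \sum_{i=1}^n\langle x,x_i\rangle x_i$, the operator equation $F^*F=A\mathcal{I}_d$ holds if and only if $\sum_{i=1}^n\langle x,x_i\rangle x_i = Ax$ for every $x\in\R^d$, which upon dividing by $A$ is precisely the expansion \eqref{eq:tight expansion}.

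The only point requiring care — the ``main obstacle'', such as it is — is the passage from ``$\langle Tx,x\rangle=0$ for all $x$'' to ``$T=0$'': this is false for general operators on $\R^d$ but valid here because $T$ is self-adjoint, which is why the remark that $F^*F$ is self-adjoint (recalled just before the statement, cf.~\cite{Christensen:2003aa}) is essential. Everything else is bookkeeping with the definitions of $F$ and $F^*$.
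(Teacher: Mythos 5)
Your proof is correct: the two identities $\langle F^*Fx,x\rangle=\sum_i|\langle x,x_i\rangle|^2$ and $F^*Fx=\sum_i\langle x,x_i\rangle x_i$, together with the polarization step for the self-adjoint operator $F^*F-A\mathcal{I}_d$, give exactly the standard argument. The paper does not prove this lemma itself but cites it from \cite{Christensen:2003aa}, and your reasoning matches that standard proof, including the correct observation that self-adjointness is what licenses passing from $\langle Tx,x\rangle=0$ for all $x$ to $T=0$ on a real space.
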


If $A=1$ in \eqref{eq:tightness}, then we call $\{x_i\}_{i=1}^n\subset\R^d$ a \emph{finite Parseval frame}. If all elements of a finite tight frame have unit norm, we call them a \emph{finite unit norm tight frame (FUNTF)} for $\R^d$. Note that a FUNTF that is also Parseval must be an orthonormal basis \cite{Christensen:2003aa}. In fact, the frame bounds of a FUNTF are given by:
\begin{lemma}[\cite{Goyal:1998aa}]\label{lemma:FUNTF frame bound}
If $\{x_i\}_{i=1}^n\subset \R^d$ is a FUNTF, then the frame bound $A$ equals $n/d$. 
\end{lemma}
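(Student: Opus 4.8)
The plan is to compute the trace of the frame operator $S = F^*F$ in two ways. On one hand, since $\{x_i\}_{i=1}^n$ is a finite tight frame with bound $A$, Lemma \ref{lemma:equivalent tight frame conditions discrete} gives $S = F^*F = A\mathcal{I}_d$, so $\trace(S) = \trace(A\mathcal{I}_d) = Ad$. On the other hand, I would use the standard identity $\trace(F^*F) = \sum_{i=1}^n \|x_i\|^2$, which follows by expanding $\langle S x, x\rangle = \sum_{i=1}^n |\langle x, x_i\rangle|^2$ against an orthonormal basis of $\R^d$, or simply from $\trace(F^*F) = \trace(FF^*)$ where $FF^*$ is the Gram matrix whose diagonal entries are $\langle x_i, x_i\rangle = \|x_i\|^2$.

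Combining the two expressions yields $Ad = \sum_{i=1}^n \|x_i\|^2$. Since $\{x_i\}_{i=1}^n$ is a FUNTF, every $\|x_i\| = 1$, so the right-hand side equals $n$, and therefore $A = n/d$.

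There is no real obstacle here; the only thing to be careful about is establishing the trace identity $\trace(F^*F) = \sum_{i=1}^n \|x_i\|^2$ cleanly. I would do this by picking any orthonormal basis $\{e_j\}_{j=1}^d$ of $\R^d$ and writing
\begin{equation*}
\trace(S) = \sum_{j=1}^d \langle S e_j, e_j\rangle = \sum_{j=1}^d \sum_{i=1}^n |\langle e_j, x_i\rangle|^2 = \sum_{i=1}^n \sum_{j=1}^d |\langle e_j, x_i\rangle|^2 = \sum_{i=1}^n \|x_i\|^2,
\end{equation*}
using Parseval's identity for the orthonormal basis in the last step. This is a short, self-contained argument that needs nothing beyond what has already been recalled in the excerpt.
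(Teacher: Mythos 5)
Your proof is correct and follows essentially the same route the paper takes: the paper cites this discrete lemma to \cite{Goyal:1998aa}, but its proof of the probabilistic analogue (Lemma \ref{lemma:frame bounds}) is exactly your trace computation, summing $A\|e_j\|^2$ over the canonical basis, using tightness, and applying Parseval's identity to get $Ad=\sum_i\|x_i\|^2=n$.
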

Every finite frame for $\R^d$ gives rise to a Parseval frame, cf.~\cite{Christensen:2003aa}:
\begin{lemma}\label{lemma:Parseval S1/2}
If $\{x_i\}_{i=1}^n$ is a finite frame for $\R^d$ with frame operator $S$, then $\{S^{-1/2}x_i\}_{i=1}^n$ is a finite Parseval frame for $\R^d$.
\end{lemma}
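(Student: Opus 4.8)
The plan is to reduce everything to the operator characterization of tight frames in Lemma \ref{lemma:equivalent tight frame conditions discrete}(ii): it suffices to check that the frame operator of the rescaled system $\{S^{-1/2}x_i\}_{i=1}^n$ equals $\mathcal{I}_d$. First I would make the object $S^{-1/2}$ precise. Since $\{x_i\}_{i=1}^n$ is a finite frame, its frame operator $S=F^*F$ is positive, self-adjoint, and invertible; by the spectral theorem it therefore has a unique positive self-adjoint square root $S^{1/2}$, which is itself invertible, and I set $S^{-1/2}:=(S^{1/2})^{-1}$. This operator is again positive, self-adjoint, and invertible, and it commutes with $S$. In particular $S^{-1/2}$ is a linear bijection of $\R^d$, so $\{S^{-1/2}x_i\}_{i=1}^n$ still spans $\R^d$ and hence is a finite frame by Lemma \ref{lemma:frame if and only if spans}; let $\tilde F$ denote its analysis operator.

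The key computation is then to identify $\tilde F$ in terms of $F$. For every $x\in\R^d$, using self-adjointness of $S^{-1/2}$,
\[
\tilde F x=\big(\langle x,S^{-1/2}x_i\rangle\big)_{i=1}^n=\big(\langle S^{-1/2}x,x_i\rangle\big)_{i=1}^n=F\,S^{-1/2}x,
\]
so that $\tilde F=FS^{-1/2}$. Consequently the frame operator of the new system is
\[
\tilde F^*\tilde F=(FS^{-1/2})^*(FS^{-1/2})=S^{-1/2}F^*F\,S^{-1/2}=S^{-1/2}\,S\,S^{-1/2}=\mathcal{I}_d .
\]
By Lemma \ref{lemma:equivalent tight frame conditions discrete}, $\{S^{-1/2}x_i\}_{i=1}^n$ is a finite tight frame for $\R^d$ with tight frame bound $A=1$, i.e.\ a finite Parseval frame, which is the claim.

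There is no substantive obstacle here; the argument is a short computation. The only point demanding a little care is the very first step — establishing that $S^{-1/2}$ exists, is self-adjoint, and is invertible — since this is precisely what legitimizes the adjoint manipulation $\tilde F=FS^{-1/2}$ and the final cancellation. This is where the standard fact (recalled above from \cite{Christensen:2003aa}) that the frame operator of any finite frame is positive, self-adjoint, and invertible is used, and after that everything is routine.
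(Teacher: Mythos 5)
Your argument is correct and is exactly the standard proof that the paper implicitly invokes by citing \cite{Christensen:2003aa}: since $S$ is positive, self-adjoint, and invertible, $S^{-1/2}$ exists and is self-adjoint, the new analysis operator is $FS^{-1/2}$, and the new frame operator $S^{-1/2}SS^{-1/2}=\mathcal{I}_d$ gives the Parseval property via Lemma \ref{lemma:equivalent tight frame conditions discrete}. No issues; the paper itself gives no separate proof, so your write-up fills that in along the same lines as the cited source.
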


The following identity and inequality for Parseval frames have been derived in
\cite{Balan:2007aa}:
\begin{theorem}[\cite{Balan:2007aa}]\label{theorem:fundamental identity}
Let $\{x_i\}_{i=1}^n\subset\R^d$ be a finite Parseval frame for $\R^d$. For every subset $J\subset \{1,\ldots,n\}=:\mathcal{N}_n$ and every $x\in\R^d$, we have
\begin{align*}
\sum_{i\in J} |\langle x,x_i\rangle |^2 - \big\| \sum_{i\in J}\langle x,x_i\rangle x_i     \big\|^2 & = \sum_{i\in \mathcal{N}_n\setminus J} |\langle x,x_i\rangle |^2 - \big\| \sum_{i\in \mathcal{N}_n\setminus J}\langle x,x_i\rangle x_i     \big\|^2,\\
\sum_{i\in J} |\langle x,x_i\rangle |^2 - \big\| \sum_{i\in \mathcal{N}_n\setminus J}\langle x,x_i\rangle x_i     \big\|^2 & \geq \frac{3}{4}\|x\|^2.
\end{align*}
\end{theorem}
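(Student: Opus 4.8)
The plan is to rephrase both assertions in terms of the partial frame operators attached to $J$ and to its complement, and then to cash in the one fact that a Parseval frame provides: these two operators add up to the identity. For $J\subseteq\mathcal{N}_n$ put $S_J:=F^*P_JF$, where $P_J\colon\R^n\to\R^n$ is the coordinate projection onto the entries indexed by $J$. Then $S_Jx=\sum_{i\in J}\langle x,x_i\rangle x_i$ and $\langle S_Jx,x\rangle=\sum_{i\in J}|\langle x,x_i\rangle|^2$, and, since $S_J$ is self-adjoint, $\|S_Jx\|^2=\langle S_J^2x,x\rangle$. Each $S_J$ is positive semidefinite, and the Parseval hypothesis (Lemma~\ref{lemma:equivalent tight frame conditions discrete}, i.e.\ $F^*F=\mathcal{I}_d$) gives $S_J+S_{\mathcal{N}_n\setminus J}=\mathcal{I}_d$. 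In particular $S_{\mathcal{N}_n\setminus J}=\mathcal{I}_d-S_J$ commutes with $S_J$, and from $0\preceq S_J\preceq\mathcal{I}_d$ the spectrum of $S_J$ lies in $[0,1]$.

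For the identity, I would simply compute
\[
\sum_{i\in J}|\langle x,x_i\rangle|^2-\Big\|\sum_{i\in J}\langle x,x_i\rangle x_i\Big\|^2=\langle(S_J-S_J^2)x,x\rangle=\langle S_J(\mathcal{I}_d-S_J)x,x\rangle=\langle S_JS_{\mathcal{N}_n\setminus J}x,x\rangle,
\]
and note that the analogous computation with $J$ replaced by $\mathcal{N}_n\setminus J$ yields $\langle S_{\mathcal{N}_n\setminus J}S_Jx,x\rangle$ for the right-hand side; since $S_JS_{\mathcal{N}_n\setminus J}=S_J-S_J^2=S_{\mathcal{N}_n\setminus J}S_J$, the two sides coincide.

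For the lower bound, substituting $S_{\mathcal{N}_n\setminus J}=\mathcal{I}_d-S_J$ turns the quantity $\sum_{i\in J}|\langle x,x_i\rangle|^2+\|\sum_{i\in\mathcal{N}_n\setminus J}\langle x,x_i\rangle x_i\|^2$ into $\langle(S_J+(\mathcal{I}_d-S_J)^2)x,x\rangle$, and then I would complete the square at the operator level:
\[
S_J+(\mathcal{I}_d-S_J)^2=\mathcal{I}_d-S_J+S_J^2=\tfrac{3}{4}\mathcal{I}_d+\big(S_J-\tfrac{1}{2}\mathcal{I}_d\big)^2\succeq\tfrac{3}{4}\mathcal{I}_d,
\]
the last step because a squared self-adjoint operator is positive semidefinite; pairing with $x$ gives the bound $\tfrac{3}{4}\|x\|^2$. (Equivalently, diagonalize $S_J$ by the spectral theorem and use $\lambda+(1-\lambda)^2\ge\tfrac{3}{4}$ for $\lambda\in[0,1]$, attained at $\lambda=\tfrac{1}{2}$, which also shows the constant is sharp.)

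I do not expect a genuine obstacle here: the argument is short once the operators $S_J$ are in hand, and the Parseval hypothesis does the real work by forcing $S_J+S_{\mathcal{N}_n\setminus J}=\mathcal{I}_d$. The one place to be careful is bookkeeping, namely recording the norm-squared terms as $\langle S_J^2x,x\rangle$ rather than $\langle S_Jx,x\rangle$: the operator $S_J-S_J^2$ so obtained is manifestly invariant under $J\leftrightarrow\mathcal{N}_n\setminus J$ (which is the identity), while $S_J+(\mathcal{I}_d-S_J)^2$ transforms the spectrum of $S_J$ through the scalar map $\lambda\mapsto\lambda+(1-\lambda)^2$, whose minimum over $[0,1]$ is $\tfrac{3}{4}$ (which is the inequality).
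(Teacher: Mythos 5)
Your argument is correct and is essentially the standard proof of this result from the cited source \cite{Balan:2007aa}; the paper itself offers no proof (it only cites), and the proof there proceeds exactly as you do: introduce the partial frame operators $S_J=F^*P_JF$, use the Parseval identity $S_J+S_{\mathcal{N}_n\setminus J}=\mathcal{I}_d$, note that $S_J-S_J^2=S_JS_{\mathcal{N}_n\setminus J}$ is unchanged under $J\leftrightarrow\mathcal{N}_n\setminus J$, and bound $S_J+(\mathcal{I}_d-S_J)^2=\tfrac34\mathcal{I}_d+(S_J-\tfrac12\mathcal{I}_d)^2\succeq\tfrac34\mathcal{I}_d$. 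One point worth flagging explicitly: in the inequality you quietly worked with $\sum_{i\in J}|\langle x,x_i\rangle|^2+\bigl\|\sum_{i\in\mathcal{N}_n\setminus J}\langle x,x_i\rangle x_i\bigr\|^2$, i.e.\ with a plus sign, which is the correct statement of \cite{Balan:2007aa}; the theorem as printed above has a minus sign there, and in that form it is false (take $J=\emptyset$: the left-hand side is $-\|x\|^2$ by the Parseval property). So your proof establishes the intended result rather than the misprinted display, and it would be worth noting that sign correction.
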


Given $n$ points $\{x_i\}_{i=1}^n$ on the sphere $S^{d-1}=\{x\in\R^d:\|x\|=1\}$, the frame potential as introduced by Benedetto and Fickus in \cite{Benedetto:2003aa} is 
\begin{equation}\label{eq:frame potential}
\FP(\{x_i\}_{i=1}^n) = \sum_{i=1}^n\sum_{j=1}^n |\langle x_i,x_j\rangle |^2.
\end{equation}
For fixed $n$, they characterized its minimizers:  
\begin{theorem}[\cite{Benedetto:2003aa}]\label{theorem:Benedetto Fickus}
Let $n$ be fixed and consider the minimization of the frame potential among all collections of $n$ points on the sphere $S^{d-1}$.
\begin{itemize}
\item[$n\leq d$:] The minimum of the frame potential is $n$. The minimizers are exactly the orthonormal systems for $\R^d$ with $n$ elements.
\item[$n\geq d$:]
The minimum of the frame potential is $\frac{n^2}{d}$. The minimizers are exactly the FUNTFs for $\R^d$ with $n$ elements.
\end{itemize}
\end{theorem}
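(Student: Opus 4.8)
The plan is to pass from the frame potential to the eigenvalues of the frame operator and reduce everything to a single convexity estimate. Let $F$ be the analysis operator of $\{x_i\}_{i=1}^n$ and $S=F^*F$ the frame operator. The Gram matrix $G:=FF^*=\big(\langle x_i,x_j\rangle\big)_{i,j}$ is real symmetric, so $\trace(G^2)=\sum_{i,j}|\langle x_i,x_j\rangle|^2=\FP(\{x_i\}_{i=1}^n)$, and by cyclicity of the trace this equals $\trace\big((FF^*)^2\big)=\trace\big((F^*F)^2\big)=\trace(S^2)$. Writing $\lambda_1,\dots,\lambda_d\ge 0$ for the eigenvalues of the positive semidefinite operator $S$, we get $\FP=\sum_{k=1}^d\lambda_k^2$. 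Two side conditions constrain the $\lambda_k$: since all $x_i$ lie on $S^{d-1}$, $\sum_k\lambda_k=\trace(S)=\sum_i\|x_i\|^2=n$; and since $S=\sum_{i=1}^n x_ix_i^{*}$ is a sum of $n$ rank-one operators, $r:=\rank(S)\le\min(n,d)$, so at most $\min(n,d)$ of the $\lambda_k$ are nonzero. Thus the task becomes: minimize $\sum_k\lambda_k^2$ over nonnegative tuples with sum $n$ and at most $\min(n,d)$ nonzero entries, and then read off which frames realize the optimum.

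By Cauchy--Schwarz applied to the nonzero $\lambda_k$, $n^2=\big(\sum_k\lambda_k\big)^2\le r\sum_k\lambda_k^2$, hence $\FP\ge n^2/r\ge n^2/\min(n,d)$, with equality throughout exactly when $r=\min(n,d)$ and all nonzero eigenvalues coincide. If $n\ge d$ this reads $\FP\ge n^2/d$, with equality iff $S=\tfrac{n}{d}\mathcal{I}_d$; by Lemma~\ref{lemma:equivalent tight frame conditions discrete} this means $\{x_i\}_{i=1}^n$ is tight with bound $n/d$, and, the vectors being unit norm, that it is a FUNTF (conversely, any FUNTF has $S=\tfrac{n}{d}\mathcal{I}_d$ by Lemmas~\ref{lemma:equivalent tight frame conditions discrete} and \ref{lemma:FUNTF frame bound}, so $\FP=d(n/d)^2=n^2/d$). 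If $n\le d$ it reads $\FP\ge n$, with equality iff $S$ has eigenvalue $1$ with multiplicity $n$ and $0$ otherwise; then $G=FF^*$, having the same nonzero spectrum as $S$ and being $n\times n$, equals $\mathcal{I}_n$, i.e.\ $\langle x_i,x_j\rangle=\delta_{ij}$, so $\{x_i\}_{i=1}^n$ is an orthonormal system (and conversely such a system has $G=\mathcal{I}_n$ and $\FP=n$).

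The one point still needing work — and the only real obstacle — is that these lower bounds are actually attained, i.e.\ that minimizers of the asserted type exist, for otherwise the stated minimum values would be wrong. For $n\le d$ this is immediate: any $n$ vectors of an orthonormal basis of $\R^d$ give $\FP=n$. For $n\ge d$ it amounts to the existence, for every such $n$, of a FUNTF for $\R^d$ with $n$ elements, which is the substantive content of Benedetto--Fickus. I would obtain it in one of two ways: (i) an explicit construction, e.g.\ a harmonic frame built from $d$ suitably chosen rows of an $n\times n$ DFT (or, in the real case, from real and imaginary parts thereof), whose columns are equal-norm and whose frame operator is a scalar; or (ii) a variational argument following Benedetto--Fickus: $\FP$ is continuous on the compact set $(S^{d-1})^n$, hence attains its minimum, and one shows any minimizer has $S$ with all eigenvalues equal — if $S$ had eigenvalues $\lambda<\mu$ with unit eigenvectors $u,v$ and some $x_i$ were not orthogonal to $\spann\{u,v\}$, an infinitesimal rotation of $x_i$ in that plane shifts spectral mass from $\mu$ toward $\lambda$ and strictly decreases $\sum_k\lambda_k^2$, contradicting minimality, while the degenerate case in which the frame splits as an orthogonal direct sum of sub-frames is handled by induction on $d$. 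Either route closes the argument, and combined with the equality analysis above it yields exactly the two claimed characterizations.
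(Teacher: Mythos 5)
Your argument is essentially correct, but note that the paper does not prove Theorem~\ref{theorem:Benedetto Fickus} at all: it is quoted from Benedetto--Fickus as background, so the relevant comparison is with their original argument and with the paper's own probabilistic analogue. Your route --- write $\FP=\trace(S^2)=\sum_k\lambda_k^2$, impose $\sum_k\lambda_k=n$ and $\rank S\le\min(n,d)$, and apply Cauchy--Schwarz with its equality case --- cleanly yields the lower bounds $n^2/d$ (for $n\ge d$) and $n$ (for $n\le d$) together with the characterization of the configurations attaining them, and your equality analysis ($S=\tfrac{n}{d}\mathcal{I}_d$ giving a FUNTF via Lemmas~\ref{lemma:equivalent tight frame conditions discrete} and~\ref{lemma:FUNTF frame bound}; $G=\mathcal{I}_n$ giving an orthonormal system) is sound. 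This is in spirit the same second-moment/H\"older mechanism the paper itself uses for Theorem~\ref{theorem:Waldron}'s generalization, Theorem~\ref{theorem:probabilistic frame potential}, so your proof aligns with the paper's probabilistic frame potential argument rather than with Benedetto--Fickus's original proof, which is a genuinely different (and stronger) variational analysis: they show every \emph{local} minimizer of the frame potential on $(S^{d-1})^n$ with $n\ge d$ is a FUNTF, handling the orthogonally decomposable degenerate configurations by induction, whereas your global convexity argument only identifies global minimizers --- which is all the stated theorem requires. You are also right that the attainment of the bound for $n\ge d$, i.e.\ existence of FUNTFs with $n$ elements in $\R^d$ for every $n\ge d$, is the one substantive point your inequality does not deliver by itself; your proposed fixes are both viable, though as written they remain sketches (the real harmonic construction needs care with frequency selection and the odd-$d$ case, and the variational route is essentially re-proving Benedetto--Fickus), so to make the write-up self-contained you should carry out one of them explicitly rather than leave it as an either/or.
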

The overlap $n=d$ in Theorem \ref{theorem:Benedetto Fickus} is not a problem since every FUNTF with $n=d$ elements is an orthonormal basis. Waldron derived an estimate of the frame potential for general points in $\R^d$, not necessarily on the sphere:
\begin{theorem}[\cite{Waldron:2003aa}]\label{theorem:Waldron}
If $\{x_i\}_{i=1}^n\subset\R^d$ are not all zero and $n\geq d$, then 
\begin{equation*}
\frac{\sum_{i=1}^n\sum_{j=1}^n |\langle x_i,x_j\rangle |^2 }{\big(\sum_{i=1}^n \|x_i\|^2 \big)^2}\geq \frac{1}{d},
\end{equation*}
and equality holds if and only if $\{x_i\}_{i=1}^n$ is a finite tight frame for $\R^d$.
\end{theorem}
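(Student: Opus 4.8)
The plan is to translate both sides of the inequality into spectral data of the frame operator and then invoke the Cauchy--Schwarz inequality. Set $S=F^*F=\sum_{i=1}^n x_i x_i^t$, the frame operator, and let $G=FF^*\in\R^{n\times n}$ be the Gram matrix, so that $G_{ij}=\langle x_i,x_j\rangle$. Cyclicity of the trace (i.e.\ $\trace(F^*FF^*F)=\trace(FF^*FF^*)$ and $\trace(F^*F)=\trace(FF^*)$) gives
\begin{equation*}
\trace(S)=\trace(G)=\sum_{i=1}^n\|x_i\|^2,\qquad \trace(S^2)=\trace(G^2)=\sum_{i=1}^n\sum_{j=1}^n|\langle x_i,x_j\rangle|^2,
\end{equation*}
so the quantity to be bounded is exactly $\trace(S^2)\big/\big(\trace(S)\big)^2$.

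Next I would pass to the eigenvalues $\lambda_1,\dots,\lambda_d\ge 0$ of the positive semidefinite matrix $S$, for which $\trace(S)=\sum_{k=1}^d\lambda_k$ and $\trace(S^2)=\sum_{k=1}^d\lambda_k^2$. Since the $x_i$ are not all zero we have $\trace(S)>0$, so the quotient is well defined, and Cauchy--Schwarz applied to $(\lambda_1,\dots,\lambda_d)$ and $(1,\dots,1)$ yields
\begin{equation*}
\Big(\sum_{k=1}^d\lambda_k\Big)^2\le d\sum_{k=1}^d\lambda_k^2,
\end{equation*}
which is precisely the asserted inequality $\trace(S^2)\ge \tfrac1d\big(\trace(S)\big)^2$.

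For the equality case, equality in Cauchy--Schwarz holds exactly when $\lambda_1=\cdots=\lambda_d$; writing $A$ for the common value, this says $S=A\,\mathcal{I}_d$, and $A>0$ because $\trace(S)>0$. By Lemma \ref{lemma:equivalent tight frame conditions discrete}(ii) this is exactly the statement that $\{x_i\}_{i=1}^n$ is a finite tight frame for $\R^d$ with bound $A$; the hypothesis $n\ge d$ serves only to keep this case non-vacuous, since by Lemma \ref{lemma:frame if and only if spans} a tight frame must span $\R^d$. The converse is immediate: if $\{x_i\}_{i=1}^n$ is a tight frame then $S=A\mathcal{I}_d$ with $A>0$, so $\sum_k\lambda_k^2=dA^2=\tfrac1d(dA)^2=\tfrac1d\big(\sum_k\lambda_k\big)^2$ and equality holds.

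I do not anticipate a genuine obstacle: the proof is essentially the two trace identities together with Cauchy--Schwarz. The only points that need a little care are verifying $\trace(S)>0$ so that the normalization in the statement is legitimate, and being precise in the equality analysis that $A$ is \emph{strictly} positive, so that $S=A\mathcal{I}_d$ indeed produces a (tight) frame rather than the zero operator.
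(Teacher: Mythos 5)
Your proof is correct: the trace identities $\trace(S)=\trace(G)=\sum_i\|x_i\|^2$ and $\trace(S^2)=\trace(G^2)=\sum_{i,j}|\langle x_i,x_j\rangle|^2$ are right, Cauchy--Schwarz on the eigenvalues gives $\bigl(\trace(S)\bigr)^2\le d\,\trace(S^2)$, and your equality analysis (all eigenvalues equal to some $A>0$, hence $S=A\mathcal{I}_d$, hence a tight frame by Lemma \ref{lemma:equivalent tight frame conditions discrete}) is sound; you are also right that $n\ge d$ only matters for the equality case to be attainable. Note, however, that the paper does not prove Theorem \ref{theorem:Waldron} itself -- it is quoted from \cite{Waldron:2003aa} -- and the closest in-paper argument is the proof of its probabilistic generalization, Theorem \ref{theorem:probabilistic frame potential}. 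That proof works entry-wise rather than spectrally: it expands the frame potential as $\sum_{i,j}m_{i,j}^2(\mu)$, where $m_{i,j}(\mu)$ are the second moments (the entries of the frame operator), and applies H\"older/Cauchy--Schwarz to the diagonal in the chain $\sum_i m_{i,i}\le d^{1/2}\bigl(\sum_i m_{i,i}^2\bigr)^{1/2}\le d^{1/2}\bigl(\sum_{i,j}m_{i,j}^2\bigr)^{1/2}$; equality then forces the off-diagonal moments to vanish and the diagonal to be constant, which is tightness by Corollary \ref{corollary:moments}. Your spectral route buys a cleaner equality case ($S=A\mathcal{I}_d$ falls out of Cauchy--Schwarz in one step, with no separate off-diagonal argument), at the cost of invoking the spectral theorem and the cyclicity of the trace; the paper's entry-wise route is more elementary in that respect and transfers verbatim to probability measures, where the frame operator is the matrix of second moments. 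In the discrete case the two are two readings of the same fact, namely that the frame potential equals $\|S\|_{\mathcal F}^2=\trace(S^2)$, bounded below by $\bigl(\trace(S)\bigr)^2/d$.
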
 
Goyal, Vetterli, and Thao have shown in \cite{Goyal:1998aa} that independent and uniformly distributed points on the sphere converge towards a FUNTF. To properly formulate the convergence, let $\mathcal{M}(\mathcal{B},S^{d-1})$ denote the collection of probability measures on $S^{d-1}$ with respect to the induced Borel $\sigma$ algebra $\mathcal{B}$. If $Z:S^{d-1}\rightarrow U\subset\R^{p\times q}$ is a random matrix/vector, distributed according to $\mu\in\mathcal{M}(\mathcal{B},S^{d-1})$, then we simply write $Z\in U$ for notational convenience. The expectation of $Z$ is defined by $E(Z):=\int_{S^{d-1}} Z(x) d\mu(x)$, where the integral is taken component-wise. Note that for a collection of random vectors $\{X_i\}_{i=1}^n\subset S^{d-1}$, the frame operator is a random matrix. 
\begin{theorem}[\cite{Goyal:1998aa}]\label{theorem:Goyal}
For any $n$, let $\{X_{k,n}\}_{k=1}^n\subset S^{d-1}$ be a collection of $n$ random vectors, i.i.d.~according to the uniform probability distribution on the sphere. If $F_n$ denotes the random matrix associated to the analysis operator of $\{X_{k,n}\}_{k=1}^n$, then the matrix operator $\frac{1}{n}F^*_n F_n$ converges towards $\frac{1}{d}\mathcal{I}_d$ in the mean squared sense, i.e., $E(\|\frac{1}{n}F_n^* F_n-\frac{1}{d}\mathcal{I}_d\|_{\mathcal{F}}^2)\rightarrow 0$, where $\|\cdot\|_\mathcal{F}$ denotes the Frobenius norm.
\end{theorem}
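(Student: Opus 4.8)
The plan is to write the normalized frame operator as an average of independent rank-one matrices and estimate the resulting mean-squared error by a direct second-moment computation; no concentration machinery beyond computing a variance is needed, and in fact one gets the sharp rate $O(1/n)$.

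\emph{Step 1: identify the object and its mean.} For any collection $\{x_i\}_{i=1}^n$ the frame operator acts by $F^*Fx=\sum_{i=1}^n\langle x,x_i\rangle x_i$, i.e.\ $F^*F=\sum_{i=1}^n x_ix_i^t$, so $\tfrac1nF_n^*F_n=\tfrac1n\sum_{k=1}^n X_{k,n}X_{k,n}^t$. Let $X$ be a single vector uniformly distributed on $S^{d-1}$. First I would show $E(XX^t)=\tfrac1d\mathcal{I}_d$: since the uniform measure on $S^{d-1}$ is invariant under every orthogonal $U$, we get $U\,E(XX^t)\,U^t=E\big((UX)(UX)^t\big)=E(XX^t)$, so $E(XX^t)$ commutes with all of $O(d)$ and hence equals $c\,\mathcal{I}_d$ for some scalar $c$; taking traces, $c\,d=E(\trace(XX^t))=E(\|X\|^2)=1$, so $c=1/d$. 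Consequently $E\big(\tfrac1nF_n^*F_n\big)=\tfrac1d\mathcal{I}_d$.

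\emph{Step 2: reduce the mean-squared error to a single second moment and compute it.} Put $Y_k:=X_{k,n}X_{k,n}^t-\tfrac1d\mathcal{I}_d$, so by Step 1 the $Y_k$ are i.i.d.\ and centered and $\tfrac1nF_n^*F_n-\tfrac1d\mathcal{I}_d=\tfrac1n\sum_{k=1}^nY_k$. Using the Frobenius inner product $\langle A,B\rangle_{\mathcal F}=\trace(A^tB)$,
\begin{equation*}
E\Bigl(\bigl\|\tfrac1n\sum_{k=1}^n Y_k\bigr\|_{\mathcal F}^2\Bigr)=\frac{1}{n^2}\sum_{k=1}^n\sum_{\ell=1}^n E\bigl(\langle Y_k,Y_\ell\rangle_{\mathcal F}\bigr),
\end{equation*}
and for $k\neq\ell$ independence together with $E(Y_k)=0$ gives $E(\langle Y_k,Y_\ell\rangle_{\mathcal F})=\langle E(Y_k),E(Y_\ell)\rangle_{\mathcal F}=0$, so the sum collapses to $\tfrac1nE(\|Y_1\|_{\mathcal F}^2)$. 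Since $X$ has unit norm, $\|XX^t\|_{\mathcal F}^2=\trace(XX^tXX^t)=\|X\|^4=1$ and $\langle XX^t,\mathcal{I}_d\rangle_{\mathcal F}=\trace(XX^t)=\|X\|^2=1$, while $\|\mathcal{I}_d\|_{\mathcal F}^2=d$; expanding the square yields $\|Y_1\|_{\mathcal F}^2=1-\tfrac2d+\tfrac1d=1-\tfrac1d$ pointwise, hence
\begin{equation*}
E\Bigl(\bigl\|\tfrac1nF_n^*F_n-\tfrac1d\mathcal{I}_d\bigr\|_{\mathcal F}^2\Bigr)=\frac1n\Bigl(1-\frac1d\Bigr)\xrightarrow[n\to\infty]{}0,
\end{equation*}
which is the assertion.

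\emph{Main obstacle.} Honestly there is no deep analytic difficulty: everything lives on the compact sphere and is bounded, so all moments are finite and the Fubini-type interchanges implicit in "integrate component-wise" are automatic, and the cross terms vanish because the summands are independent and centered. The only genuine content is the symmetry argument of Step~1 pinning down $E(XX^t)=\tfrac1d\mathcal{I}_d$; after that, the proof is the elementary fact that the variance of an average of $n$ i.i.d.\ centered matrices scales like $1/n$, together with the identities $\|XX^t\|_{\mathcal F}^2=\|X\|^4$ and $\trace(XX^t)=\|X\|^2$, which collapse to constants precisely because the sampled vectors have unit norm.
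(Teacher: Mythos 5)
Your proof is correct and in fact yields the exact identity $\frac{1}{n}\bigl(1-\frac{1}{d}\bigr)$, which is precisely Corollary \ref{corollary:final main result}. The paper only quotes this theorem from \cite{Goyal:1998aa} and instead proves the generalization Theorem \ref{theorem:final main result} by essentially the same second-moment computation you carry out --- done entry-by-entry in terms of the moments $m_{i,j}(\mu)$ rather than coordinate-free via the Frobenius inner product of centered i.i.d.\ rank-one matrices --- with your Step 1, $E(XX^t)=\frac{1}{d}\mathcal{I}_d$ by orthogonal invariance, corresponding to Proposition \ref{proposition:uniform distribution}, which the paper proves by the explicit symmetry of the sphere.
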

Note that $\frac{1}{n}F^*_n F_n=\frac{1}{d}\mathcal{I}_d$ would mean that we have a FUNTF, cf.~Lemma \ref{lemma:equivalent tight frame conditions discrete}. In the present paper, we develop a framework that leads to a significant generalization of Theorem \ref{theorem:Goyal}.


\section{Probabilistic Frames}\label{section:characterization}

\subsection{Probabilistic Tight Frames}\label{section:probabilistic unit norm tight frames}
In this section, we shall introduce a probabilistic analogue of finite frames. Let $K$ be a nonempty subset of $\R^d$ and let $\mathcal{M}(\mathcal{B},K)$ denote the collection of probability measures on $K$ with respect to the induced Borel $\sigma$ algebra $\mathcal{B}$. 
\begin{definition}\label{def:prob frame 0}
A probability measure $\mu\in\mathcal{M}(\mathcal{B},K)$ is called a \emph{probabilistic frame for $\R^d$} if there are constants $0<A\leq B$ such that
 \begin{equation*}
 A\|x\|^2 \leq \int_{K} |\langle x,y\rangle |^2 d\mu (y) \leq B\|x\|^2,\quad\text{for all $x\in\R^d$.}
 \end{equation*}
The constants $A$ and $B$ are called \emph{lower and upper probabilistic frame bounds}, respectively. If only the upper inequality holds, then we call $\mu$ a \emph{Bessel measure}. A probabilistic frame $\mu$ for $\R^d$ is called a \emph{probabilistic unit norm frame} if $K=S^{d-1}$.
\end{definition}
It should be mentioned that Definition \ref{def:prob frame 0} is not entirely new, but constitutes a shift of perspective:
\begin{remark}
In standard continuous frame theory, the measure $\mu$ is fixed and elements in a Hilbert space form the frame that is indexed by a continuous set. Definition \ref{def:prob tight frame} means a shift of perspective because we identify the index set with the elements in the Hilbert space and hold them fixed (to be $K$). We now allow the measure $\mu$ to vary, which then encodes the frame. 
\end{remark}
If $\{x_i\}_{i=1}^n$ is a frame for $\R^d$, then the normalized counting measure $\frac{1}{n}\mu_{x_1\ldots,x_n}$ is a probabilistic frame for $\R^d$ with respect to any subset $K$ that contains $\{x_i\}_{i=1}^n$. Thus, Definition \ref{def:prob frame 0} extends the concept of finite frames for $\R^d$. 

The support of $\mu\in\mathcal{M}(\mathcal{B}, K )$ is
\begin{equation*}
\supp(\mu)=\{x\in  K  : \mu(U_x)>0, \text{ for all open subsets $U_x\subset  K $ that contain $x$}  \},
\end{equation*}
and the following is the probabilistic counterpart of Lemma \ref{lemma:frame if and only if spans}:
\begin{proposition}
Assume that $K\subset \R^d$ is bounded. A probability measure $\mu\in\mathcal{M}(\mathcal{B}, K )$ is a probabilistic frame for $\R^d$ if and only if its support spans $\R^d$. 
\end{proposition}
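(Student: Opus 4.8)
The plan is to route everything through the \emph{probabilistic frame operator} $S_\mu\colon\R^d\to\R^d$ defined by $S_\mu x=\int_K\langle x,y\rangle\,y\,d\mu(y)$. Since $K$ is bounded, $R:=\sup_{y\in K}\norm{y}<\infty$, so the integrand has norm at most $R^2\norm{x}$; hence $S_\mu$ is a well-defined matrix, and it is symmetric with $\langle S_\mu x,x\rangle=\int_K|\langle x,y\rangle|^2\,d\mu(y)\ge 0$, so it is positive semidefinite. The same bound, via Cauchy--Schwarz, gives $\int_K|\langle x,y\rangle|^2\,d\mu(y)\le R^2\norm{x}^2$, so the upper (Bessel) inequality holds automatically and $\mu$ is a probabilistic frame if and only if a lower bound $A>0$ exists. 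As $S_\mu$ is symmetric and positive semidefinite, it is orthogonally diagonalizable with nonnegative eigenvalues, and $\langle S_\mu x,x\rangle\ge\lambda_{\min}(S_\mu)\norm{x}^2$ with equality attained; thus a lower frame bound exists precisely when $\lambda_{\min}(S_\mu)>0$, i.e. precisely when $\int_K|\langle x,y\rangle|^2\,d\mu(y)>0$ for every $x\neq 0$.

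It then remains to identify this last condition with $\supp(\mu)$ spanning $\R^d$. Fix $x\neq 0$. Because $|\langle x,\cdot\rangle|^2\ge 0$, the integral $\int_K|\langle x,y\rangle|^2\,d\mu(y)$ vanishes if and only if $\langle x,y\rangle=0$ for $\mu$-almost every $y$, i.e. if and only if $\mu\big(\{y\in K:\langle x,y\rangle\neq 0\}\big)=0$; I claim this is equivalent to $\supp(\mu)\subseteq x^{\perp}$. If $\supp(\mu)\subseteq x^{\perp}$, then, using that $\mu(\supp(\mu))=1$ --- valid because $K\subseteq\R^d$ is second countable, so the open set $K\setminus\supp(\mu)$ is a countable union of $\mu$-null sets --- the set $\{y:\langle x,y\rangle\neq 0\}$ is $\mu$-null. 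Conversely, if some $z\in\supp(\mu)$ satisfies $\langle x,z\rangle\neq 0$, then $\{y\in K:\langle x,y\rangle\neq 0\}$ is an open neighborhood of $z$ in $K$ and therefore has positive $\mu$-measure. Hence $S_\mu$ is positive definite if and only if $\supp(\mu)$ is contained in no hyperplane $x^{\perp}$ through the origin, which is exactly the statement that $\supp(\mu)$ spans $\R^d$. Chaining the equivalences completes the argument.

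I expect the only genuinely delicate point to be the measure-theoretic/topological bridge in the middle step --- that $\supp(\mu)\subseteq x^{\perp}$ is equivalent to $\langle x,\cdot\rangle$ vanishing $\mu$-almost everywhere --- which rests on the full-measure property $\mu(\supp(\mu))=1$; I would record (or cite) that fact explicitly to keep the proof self-contained. Everything else is a one-line Cauchy--Schwarz estimate together with elementary spectral theory for the symmetric positive semidefinite matrix $S_\mu$, and boundedness of $K$ is used only to secure the upper frame bound.
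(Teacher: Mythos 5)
Your proof is correct, but it reaches the lower frame bound by a different mechanism than the paper. The paper keeps the analysis at the level of the quadratic form $x\mapsto\int_K|\langle x,y\rangle|^2d\mu(y)$: it invokes dominated convergence to get continuity, compactness of $S^{d-1}$ to attain the infimum, and then argues at the minimizer that spanning support forces a neighborhood $U_{y_0}$ with $|\langle x,y\rangle|^2>\varepsilon$ and $\mu(U_{y_0})>0$, giving $A\geq\varepsilon\mu(U_{y_0})>0$. You instead package the quadratic form as the second-moment (frame) operator $S_\mu$, note it is symmetric positive semidefinite, and convert ``positive for every $x\neq0$'' into the uniform bound $\lambda_{\min}(S_\mu)>0$ by the spectral theorem; the remaining work is the pointwise equivalence between $\int_K|\langle x,y\rangle|^2d\mu(y)=0$ and $\supp(\mu)\subseteq x^{\perp}$, whose converse direction uses the same open-neighborhood argument as the paper. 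Your route buys two things: it avoids the dominated-convergence/compactness step (finite-dimensionality enters only through the eigenvalue argument), and it makes explicit the measure-theoretic point $\mu(\supp(\mu))=1$ (via second countability of $K$), which the paper's forward implication --- ``$x\in\supp(\mu)^{\perp}$ implies the integral vanishes'' --- uses silently. The paper's argument, in turn, is more self-contained at the level of the frame inequality itself and exhibits the explicit lower bound $\varepsilon\mu(U_{y_0})$ rather than the abstract quantity $\lambda_{\min}(S_\mu)$; it also previews the operator-free style used elsewhere in Section 3. Both proofs are complete and either would serve.
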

\begin{proof}
If the support does not span $\R^d$, then there exists an element $x\in\supp(\mu)^\bot$ that satisfies $ \int_{ K } |\langle x,y\rangle |^2 d\mu (y) =0$. Therefore, $\mu$ cannot be a probabilistic frame.  

For the reverse implication, we observe that the Cauchy-Schwartz inequality yields
 \begin{equation}\label{eq:Bessel}
 \int_{K} |\langle x,y\rangle |^2 d\mu (y) \leq \sup_{y\in K}(\|y\|^2) \|x\|^2,\quad\text{for all $x\in\R^d$.}
 \end{equation}
Since $K$ is bounded, $\mu$ is a Bessel measure and the upper probabilistic frame bound $B$ exists. To find a lower probabilistic frame bound, let us define
\begin{equation*}
A:=\inf_{x\in\R^d} \big(\frac{\int_{ K } |\langle x,y\rangle |^2 d\mu (y)}{\|x\|^2} \big) = \inf_{x\in S^{d-1}} \big(\int_{ K } |\langle x,y\rangle |^2 d\mu (y)\big).
\end{equation*}
Due to the dominated convergence theorem, the mapping $x\mapsto \int_{ K } |\langle x,y\rangle |^2 d\mu (y)$ is continuous and the infimum is in fact a minimum since $S^{d-1}$ is compact. Let $x$ be in $S^{d-1}$ such that 
\begin{equation*}
A=\int_{ K } |\langle x,y\rangle |^2 d\mu (y).
\end{equation*} 
Since $\supp(\mu)$ spans $\R^d$, $x$ cannot be in the orthogonal complement of $\supp(\mu)$, and thus there is $y_0\in\supp(\mu)$ such that $|\langle x,y_0\rangle|^2>0$. Therefore, there is $\varepsilon >0$ and an open subset $U_{y_0}\subset  K $ satisfying $y_0\in U_{y_0}$ and $|\langle x,y\rangle|^2>\varepsilon$, for all $y\in U_{y_0}$. Since $\mu(U_{y_0})>0$, we obtain $A\geq \varepsilon \mu(U_{y_0})>0$, which concludes the proof.
\end{proof}
The \emph{analysis operator} 
 \begin{equation*}
 F: \R^d \rightarrow L_2(K,\mu), \quad x\mapsto \langle x,\cdot \rangle_{\R^d}
 \end{equation*}
is bounded with norm less than or equal to $\sup_{y\in K}(\|y\|^2)$ if and only if \eqref{eq:Bessel} holds. We call the adjoint operator 
 \begin{equation*}
  F^*: L_2(K,\mu) \rightarrow\R^d , \quad f\mapsto \int_{ K } f(x)xd\mu(x)
 \end{equation*}
the \emph{synthesis operator}, where the integral is vector valued. If $\mu \in \mathcal{M}(\mathcal{B},K)$ is a probabilistic frame for $\R^d$ with \emph{frame operator} $S=F^* F$, then $S$ is positive, self-adjoint, and invertible. Moreover, for $\tilde{\mu}=\mu \circ S$, we obtain
\begin{equation}\label{reconsppf}
y= \int_{S^{-1}K}Sz \, \langle z, y\rangle \, d\tilde{\mu}(z)= \int_{S^{-1}K}z \, \langle Sz, y\rangle \, d\tilde{\mu}(z),\quad\text{for all $y\in\R^d$,}
\end{equation} 
which follows from $S^{-1}S=SS^{-1}=\mathcal{I}_d$. In fact, if $\mu \in \mathcal{M}(\mathcal{B},K)$ is a probabilistic frame for $\R^d$, then $\tilde{\mu} \in \mathcal{M}(S^{-1}\mathcal{B},S^{-1}K)$ is a probabilistic frame for $\R^d$. Note that if $\mu$ is the counting measure corresponding to a FUNTF $\{x_i\}_{i=1}^n$, then $\tilde{\mu}$ is the counting measure associated to the canonical dual frame of $\{x_i\}_{i=1}^n$, and Equation \eqref{reconsppf} reduces to \eqref{eq:reconstruction for frames}. These observations motivate the following definition: 
\begin{definition}\label{candualppf} If $\mu \in \mathcal{M}(\mathcal{B},K)$ is a probabilistic frame with frame operator $S$, then $\tilde{\mu}=\mu \circ S \in \mathcal{M}(S^{-1}\mathcal{B},S^{-1}K)$ is  called the \emph{probabilistic canonical dual frame} of $\mu$.
\end{definition}

\begin{remark}
The frame operator $S_1$ of a finite frame $\{x_i\}_{i=1}^n$ has a different normalization than the frame operator $S_2$ of the associated normalized counting measure $\frac{1}{n}\mu_{x_1,\ldots,x_n}$. In fact, we have $S_2=\frac{1}{n}S_1$.
\end{remark}

Next, we generalize finite tight frames: 
\begin{definition}\label{def:prob tight frame}
A probability measure $\mu\in\mathcal{M}(\mathcal{B}, K )$ is called a \emph{probabilistic tight frame} for $\R^d$ if there is a positive constant $0<A$ such that 
 \begin{equation}\label{eq:definition of probabilistic unit norm tight frame}
 A\|x\|^2 = \int_{ K } |\langle x,y\rangle |^2 d\mu (y),\quad\text{for all $x\in\R^d$.}
 \end{equation}
We call $\mu$ a \emph{probabilistic Parseval frame} for $\R^d$ if \eqref{eq:definition of probabilistic unit norm tight frame} holds with $A=1$. The probability measure $\mu$ is called a \emph{probabilistic unit norm tight frame} for $\R^d$ if it is a probabilistic tight frame with $K=S^{d-1}$.
\end{definition}
The following lemma is the probabilistic version of Lemma \ref{lemma:equivalent tight frame conditions discrete} and can be derived from results in continuous frame theory:
\begin{lemma}\label{lemma:equivalent tight frame conditions}
Let $\mu\in\mathcal{M}(\mathcal{B}, K )$ and let $A$ be a positive constant. The following points are equivalent:
\begin{itemize}
\item[\textnormal{(i)}] $\mu$ is a probabilistic tight frame with frame bound $A$,
\item[\textnormal{(ii)}] $F^* F=A\mathcal{I}_d$,
\item[\textnormal{(iii)}] $ x =\frac{1}{A} \int_{ K } \langle x, y\rangle y d\mu(y)$, for all $x\in\R^d$.
\end{itemize}
\end{lemma}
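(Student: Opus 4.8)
The plan is to establish the chain of equivalences directly from the identification of the frame operator $S=F^*F$ with the quadratic form $x\mapsto\int_K|\langle x,y\rangle|^2\,d\mu(y)$, using the elementary fact that a self-adjoint operator on $\R^d$ is determined by its quadratic form. Concretely, I would first record the two basic identities that are valid for all $x,z\in\R^d$ (whenever $F$ is well defined): on the one hand $\langle F^*Fx,z\rangle_{\R^d}=\langle Fx,Fz\rangle_{L_2(K,\mu)}=\int_K\langle x,y\rangle\langle z,y\rangle\,d\mu(y)$, so in particular $\langle F^*Fx,x\rangle_{\R^d}=\int_K|\langle x,y\rangle|^2\,d\mu(y)$; on the other hand, directly from the definition of $F^*$, $F^*Fx=\int_K\langle x,y\rangle\,y\,d\mu(y)$. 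One should note that (i) forces $\mu$ to be a Bessel measure with bound $A$, hence $F$ and $F^*$ are bounded and all manipulations are legitimate; under (ii) or (iii) the operators are likewise well defined, and in any case this is automatic once $K$ is bounded.

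For (i)$\Leftrightarrow$(ii): by the first identity, (i) says precisely that $\langle(F^*F-A\mathcal{I}_d)x,x\rangle_{\R^d}=0$ for every $x\in\R^d$. Since $F^*F$ is self-adjoint, so is $T:=F^*F-A\mathcal{I}_d$, and polarization $\langle Tx,z\rangle=\tfrac14\big(\langle T(x+z),x+z\rangle-\langle T(x-z),x-z\rangle\big)$ shows that the whole bilinear form of $T$ vanishes, i.e.\ $T=0$; the reverse implication is immediate. For (ii)$\Leftrightarrow$(iii): by the second identity, $F^*F=A\mathcal{I}_d$ is the statement $\int_K\langle x,y\rangle\,y\,d\mu(y)=Ax$ for all $x\in\R^d$, which, after dividing by $A>0$, is exactly condition (iii).

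I do not expect any genuine obstacle: this is the probabilistic transcription of Lemma~\ref{lemma:equivalent tight frame conditions discrete}, and, as noted in the text, it also follows from standard results in continuous frame theory. The only points deserving a line of care are the self-adjointness/polarization step (one cannot deduce $T=0$ from $\langle Tx,x\rangle\equiv0$ for a general operator, but one can when $T$ is self-adjoint, which $F^*F-A\mathcal{I}_d$ is) and the bookkeeping ensuring that the analysis and synthesis operators are defined and that the vector-valued integral $\int_K\langle x,y\rangle\,y\,d\mu(y)$ converges, which is guaranteed by the Bessel property implicit in each of (i), (ii), (iii).
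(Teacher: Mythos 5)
Your argument is correct. Note, however, that the paper does not actually prove this lemma: it simply remarks that it ``can be derived from results in continuous frame theory'' and moves on, so your self-contained proof is genuinely more explicit than what the text offers. Your route is the natural one, and it is consistent with how the paper uses the lemma later: the identity $F^*Fx=\int_K\langle x,y\rangle\,y\,d\mu(y)$ that drives your (ii)$\Leftrightarrow$(iii) step is exactly the computation the author performs afterwards when identifying the frame operator with the matrix of second moments (leading to Corollary \ref{corollary:moments}), and your (i)$\Leftrightarrow$(ii) step via $\langle F^*Fx,x\rangle=\int_K|\langle x,y\rangle|^2\,d\mu(y)$ together with polarization for the self-adjoint operator $F^*F-A\mathcal{I}_d$ is the standard continuous-frame argument being invoked implicitly. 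You are right to flag that $\langle Tx,x\rangle\equiv 0$ does not force $T=0$ for general real operators, so self-adjointness is essential there. One small remark on the bookkeeping: for (iii)$\Rightarrow$(i) you do not even need to pre-establish a Bessel bound, since pairing the identity $x=\frac{1}{A}\int_K\langle x,y\rangle\,y\,d\mu(y)$ with $x$ itself immediately yields $\|x\|^2=\frac{1}{A}\int_K|\langle x,y\rangle|^2\,d\mu(y)$, which is (i); this slightly streamlines the well-definedness discussion when $K$ is unbounded.
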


Many properties of finite frames can be carried over. For instance, we can follow the lines in \cite{Christensen:2003aa} to derive a generalization of Lemma \ref{lemma:Parseval S1/2}:
\begin{proposition}
If $\mu\in \mathcal{M}(\mathcal{B},K)$ is a probabilistic frame for $\R^d$, then $\mu\circ S^{1/2}\in \mathcal{M}(S^{-1/2}\mathcal{B},S^{-1/2}K)$ is a probabilistic Parseval frame for $\R^d$. 
\end{proposition}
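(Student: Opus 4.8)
The plan is to deduce the Parseval identity for the pushforward measure $\nu:=\mu\circ S^{1/2}$ directly from the defining property of the frame operator $S$ of $\mu$, via a change of variables. First I would record the standing facts: since $\mu$ is a probabilistic frame, $S=F^*F$ is positive, self-adjoint and invertible, so the positive square root $S^{1/2}$ and its inverse $S^{-1/2}$ exist and are themselves positive, self-adjoint and invertible, and they commute with $S$. Hence the linear bijection $S^{1/2}$ maps $S^{-1/2}K$ onto $K$ and carries the $\sigma$-algebra $S^{-1/2}\mathcal{B}$ onto $\mathcal{B}$, so that $\nu=\mu\circ S^{1/2}$ is a well-defined probability measure on $(S^{-1/2}K,S^{-1/2}\mathcal{B})$; concretely, $\nu$ is the image measure of $\mu$ under the map $S^{-1/2}$.

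Next I would establish the transformation rule $\int_{S^{-1/2}K} h(z)\,d\nu(z)=\int_{K} h(S^{-1/2}y)\,d\mu(y)$ for nonnegative Borel functions $h$, by checking it on indicator functions $h=\mathbf 1_{S^{-1/2}B'}$ with $B'\in\mathcal{B}$ (where it reads $\nu(S^{-1/2}B')=\mu(B')$ by the very definition of $\nu$) and extending by linearity and monotone convergence. Applying this with $h(z)=|\langle x,z\rangle|^2$ for a fixed $x\in\R^d$ and using the self-adjointness of $S^{-1/2}$, i.e.\ $\langle x,S^{-1/2}y\rangle=\langle S^{-1/2}x,y\rangle$, gives
\[
\int_{S^{-1/2}K} |\langle x,z\rangle|^2\,d\nu(z)=\int_{K} |\langle S^{-1/2}x,y\rangle|^2\,d\mu(y).
\]
Then I would invoke the identity $\int_{K}|\langle w,y\rangle|^2\,d\mu(y)=\langle Sw,w\rangle$, which follows from $Sw=\int_K\langle w,y\rangle y\,d\mu(y)$, with the choice $w=S^{-1/2}x$. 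Since $SS^{-1/2}=S^{1/2}$ and $S^{-1/2}$ is self-adjoint with $S^{-1/2}S^{1/2}=\mathcal{I}_d$, we get $\langle SS^{-1/2}x,S^{-1/2}x\rangle=\langle S^{1/2}x,S^{-1/2}x\rangle=\langle S^{-1/2}S^{1/2}x,x\rangle=\norm{x}^2$. Combining the last two displays yields $\int_{S^{-1/2}K}|\langle x,z\rangle|^2\,d\nu(z)=\norm{x}^2$ for every $x\in\R^d$, which is precisely the statement that $\nu$ is a probabilistic Parseval frame.

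There is no substantial obstacle here; the only point needing a little care is the measure-theoretic bookkeeping — verifying that the paper's notation $\mu\circ S^{1/2}$ is the image measure of $\mu$ under $S^{-1/2}$, and justifying the change of variables relative to the induced $\sigma$-algebra $S^{-1/2}\mathcal{B}$ (so that $\nu$ genuinely lives in $\mathcal{M}(S^{-1/2}\mathcal{B},S^{-1/2}K)$ and all the sets $S^{1/2}B$ are $\mathcal{B}$-measurable). An equivalent, slightly slicker route avoids the change of variables entirely: one checks that the frame operator of an image measure $T_\#\mu$ under a linear map $T$ equals $TST^*$, specializes to $T=S^{-1/2}$ to obtain the frame operator $S^{-1/2}SS^{-1/2}=\mathcal{I}_d$ for $\nu$, and then quotes the equivalence (ii)$\Leftrightarrow$(i) of Lemma \ref{lemma:equivalent tight frame conditions}; this mirrors the finite-dimensional argument behind Lemma \ref{lemma:Parseval S1/2}, and I would likely include it as a concluding remark.
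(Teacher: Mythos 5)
Your proof is correct and in substance coincides with the paper's (which is only sketched there by pointing to the finite-frame argument of Lemma \ref{lemma:Parseval S1/2}): the essential content is that conjugating the frame operator by the self-adjoint invertible map $S^{-1/2}$ yields $S^{-1/2}SS^{-1/2}=\mathcal{I}_d$, which is exactly what your change-of-variables computation $\int_{S^{-1/2}K}|\langle x,z\rangle|^2\,d\nu(z)=\langle SS^{-1/2}x,S^{-1/2}x\rangle=\|x\|^2$ expresses, and your concluding remark via Lemma \ref{lemma:equivalent tight frame conditions}(ii) is precisely the route the paper intends. The only added value of your write-up is that it makes explicit the pushforward interpretation of $\mu\circ S^{1/2}$ and the measurability bookkeeping on $(S^{-1/2}K,S^{-1/2}\mathcal{B})$, which the paper leaves implicit.
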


Only the frame operator and associated operators are used in the proof of Theorem \ref{theorem:fundamental identity} in \cite{Balan:2007aa}. Therefore, we can follow those lines and obtain the fundamental identity and inequality of probabilistic Parseval frames:
\begin{proposition}
Let $\mu\in\mathcal{M}\mathcal{B},K)$ be a probabilistic Parseval frame for $\R^d$. For every measurable subset $J\subset K$ and every $x\in\R^d$, we have
\begin{align*}
\int_J |\langle x,y\rangle |^2d\mu(y) - \big\| \int_J\langle x,y\rangle y d\mu(y)     \big\|^2 & = \int_{K\setminus J} |\langle x,y\rangle |^2d\mu(y) - \big\| \int_{K\setminus J}\langle x,y\rangle y d\mu(y)     \big\|^2,\\
\int_J |\langle x,y\rangle |^2d\mu(y) - \big\| \int_{K\setminus J}\langle x,y\rangle y d\mu(y)     \big\|^2 & \geq \frac{3}{4}\|x\|^2.
\end{align*}
\end{proposition}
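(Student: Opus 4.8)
The plan is to transcribe the proof of Theorem~\ref{theorem:fundamental identity} from \cite{Balan:2007aa} essentially verbatim, replacing finite sums over index subsets by integrals over measurable subsets of $K$; as already remarked, that proof only manipulates the frame operator together with its partial versions, so I expect no real surprises. For a measurable set $J\subset K$ I would introduce the \emph{partial frame operator}
\[
S_J\colon\R^d\to\R^d,\qquad S_Jx=\int_J\langle x,y\rangle\,y\,d\mu(y),
\]
and first establish its basic properties. The one point that genuinely uses the probabilistic setting is well-definedness: the vector-valued integral converges because $\int_K\|y\|^2\,d\mu(y)<\infty$, which is automatic for a probabilistic Parseval frame --- applying \eqref{eq:definition of probabilistic unit norm tight frame} to an orthonormal basis $e_1,\dots,e_d$ and summing over $j$ gives $\int_K\|y\|^2\,d\mu(y)=d$ --- so Cauchy--Schwarz controls $\int_J|\langle x,y\rangle|\,\|y\|\,d\mu(y)$. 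From $\langle S_Jx,z\rangle=\int_J\langle x,y\rangle\langle y,z\rangle\,d\mu(y)=\langle x,S_Jz\rangle$ one sees $S_J$ is self-adjoint; from $\langle S_Jx,x\rangle=\int_J|\langle x,y\rangle|^2\,d\mu(y)$ and the Parseval identity on all of $K$ one gets $0\le S_J\le\mathcal{I}_d$; and since the indicators of $J$ and $K\setminus J$ sum to $1$ on $K$ (equivalently, by additivity of the integral), one obtains the crucial relation $S_J+S_{K\setminus J}=S=\mathcal{I}_d$, the last equality being Lemma~\ref{lemma:equivalent tight frame conditions}\,(ii) for a Parseval frame.

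For the first assertion (the identity) I would simply compute with these operators: self-adjointness of $S_J$ gives $\bigl\|\int_J\langle x,y\rangle y\,d\mu(y)\bigr\|^2=\|S_Jx\|^2=\langle S_J^2x,x\rangle$, so the left-hand side equals $\langle(S_J-S_J^2)x,x\rangle=\langle S_JS_{K\setminus J}x,x\rangle$; running the same computation with $J$ replaced by $K\setminus J$ produces $\langle S_{K\setminus J}S_Jx,x\rangle$ for the right-hand side, and the two are equal because $S_J$ commutes with $S_{K\setminus J}=\mathcal{I}_d-S_J$.

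For the second assertion (the inequality) I would expand the remaining squared norm via $S_{K\setminus J}=\mathcal{I}_d-S_J$ and combine it with $\int_J|\langle x,y\rangle|^2\,d\mu(y)=\langle S_Jx,x\rangle$; the left-hand side then collapses to a quadratic form $\langle p(S_J)x,x\rangle$ in the single self-adjoint operator $S_J$, with $p(\lambda)=\lambda^2-\lambda+1=(\lambda-\tfrac12)^2+\tfrac34$. Since the spectrum of $S_J$ lies in $[0,1]$, diagonalising $S_J$ over an orthonormal eigenbasis reduces the claim to $p(\lambda)\ge\tfrac34$ for $\lambda\in[0,1]$, which is immediate. (Alternatively, avoiding the spectral calculus: the identity just proved together with $S_J+S_{K\setminus J}=\mathcal{I}_d$ rewrites the left-hand side as $\tfrac12\bigl(\|x\|^2+\|S_Jx\|^2+\|S_{K\setminus J}x\|^2\bigr)$, and then $\|S_Jx\|^2+\|S_{K\setminus J}x\|^2\ge\tfrac12\|(S_J+S_{K\setminus J})x\|^2=\tfrac12\|x\|^2$ finishes the proof.)

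I do not anticipate a genuine obstacle. Once $S_J$ and $S_{K\setminus J}$ are in hand as bounded self-adjoint operators on $\R^d$ summing to $\mathcal{I}_d$, every remaining step is literally the finite-dimensional manipulation of \cite{Balan:2007aa}; the only thing requiring new care is the measure-theoretic point above --- that $x\mapsto\int_J\langle x,y\rangle y\,d\mu(y)$ is a legitimate componentwise integral defining a bounded operator --- and the Parseval hypothesis is exactly what supplies the finite second moment making this routine.
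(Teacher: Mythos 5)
Your proposal is correct and is precisely the route the paper takes: its own ``proof'' is just the observation that the argument of Balan, Casazza, Edidin and Kutyniok uses nothing but the frame operator and its partial versions, so it transfers verbatim once the operators $S_J$ and $S_{K\setminus J}=\mathcal{I}_d-S_J$ are defined as you do (the finite second moment $\int_K\|y\|^2\,d\mu(y)=d$ being the only genuinely new measure-theoretic point), and your computations fill in exactly those steps. One remark: your expansion producing $\lambda^2-\lambda+1=(\lambda-\tfrac12)^2+\tfrac34$ is the expansion of $\int_J|\langle x,y\rangle|^2\,d\mu(y)\,+\,\bigl\|\int_{K\setminus J}\langle x,y\rangle y\,d\mu(y)\bigr\|^2$; with the minus sign as literally printed in the statement the bound fails (take $J=\emptyset$, which gives $-\|x\|^2$ by the Parseval reconstruction), so that sign is a typo inherited from the finite-frame theorem as quoted, and your argument proves the intended ``$+$'' version.
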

If $\{x_i\}_{i=1}^n \subset \R^d$ are pairwise distinct vectors, that form a finite tight frame for $\R^d$, then the normalized counting measure $\frac{1}{n}\mu_{x_1,\ldots,x_n}$ is a probabilistic tight frame for $\R^d$. Lemma \ref{lemma:FUNTF frame bound} can also be carried over: 
\begin{lemma}\label{lemma:frame bounds}
If $\mu\in\mathcal{M}(\mathcal{B}, K )$ is a probabilistic tight frame for $\R^d$, then the frame bound $A$ equals $\frac{1}{d}\int_K \|x\|^2d\mu(x)$.
  \end{lemma}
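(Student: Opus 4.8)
The plan is to compute the trace of the frame operator in two ways. Since $\mu$ is a probabilistic tight frame with bound $A$, Lemma \ref{lemma:equivalent tight frame conditions} gives $F^*F = A\mathcal{I}_d$, so $\trace(F^*F) = Ad$. The idea is to evaluate the same trace directly from the integral representation of $F^*F$ and compare.

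First I would fix an orthonormal basis $\{e_j\}_{j=1}^d$ of $\R^d$ and apply the defining identity \eqref{eq:definition of probabilistic unit norm tight frame} with $x = e_j$ for each $j$, obtaining $A = A\|e_j\|^2 = \int_K |\langle e_j, y\rangle|^2 d\mu(y)$. Summing over $j$ from $1$ to $d$ yields
\begin{equation*}
Ad = \sum_{j=1}^d \int_K |\langle e_j, y\rangle|^2 d\mu(y) = \int_K \sum_{j=1}^d |\langle e_j, y\rangle|^2 d\mu(y) = \int_K \|y\|^2 d\mu(y),
\end{equation*}
where interchanging the finite sum and the integral is justified since each integrand is nonnegative (or, if one prefers, because $K$ can be taken bounded in the relevant cases and the integrands are bounded), and the last equality is Parseval's identity in $\R^d$. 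Dividing by $d$ gives $A = \frac{1}{d}\int_K \|x\|^2 d\mu(x)$, which is the claim. An alternative phrasing is to note $\trace(F^*F) = \int_K \trace(yy^t)\, d\mu(y) = \int_K \|y\|^2 d\mu(y)$ directly, since the frame operator is $S = \int_K y y^t d\mu(y)$.

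There is essentially no obstacle here; the only point requiring a word of care is the exchange of summation and integration, which is immediate by linearity of the integral for a \emph{finite} sum (no convergence theorem is even needed). One should also observe that the integral $\int_K \|x\|^2 d\mu(x)$ is automatically finite: taking $x$ to range over $S^{d-1}$ and integrating the identity over the sphere, or simply noting that $Ad$ is finite, forces finiteness. In the special case $K = S^{d-1}$ (a probabilistic unit norm tight frame) the integrand is identically $1$, so $A = 1/d$, recovering the normalized analogue of Lemma \ref{lemma:FUNTF frame bound}.
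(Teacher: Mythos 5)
Your proof is correct and follows essentially the same route as the paper: apply the tight-frame identity to the canonical basis vectors, sum over the basis, exchange the finite sum with the integral, and invoke Parseval's identity to obtain $Ad=\int_K\|x\|^2d\mu(x)$. The trace reformulation and the finiteness remark are fine additions but not needed; no gaps.
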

  \begin{proof}
If $e_1,\ldots,e_d$ is the canonical basis for $\R^d$, then we have $A d = \sum_{i=1}^d A \|e_i\|^2 $. The equality \eqref{eq:definition of probabilistic unit norm tight frame} and finally the Parseval equality for orthonormal bases yield  
  \begin{equation*}
  A d   = \sum_{i=1}^d \int_{K} |\langle e_j,x\rangle |^2 d\mu(x)
    =  \int_{K} \sum_{i=1}^d |\langle e_j,x\rangle |^2 d\mu(x) = \int_{K}\|x\|^2 d\mu(x). \qedhere
    \end{equation*}
  \end{proof}

For $\mu\in\mathcal{M}(\mathcal{B}, K )$, one easily verifies that the frame operator $S=F^* F$ is given by
\begin{equation*}
F^*F:\R^d\rightarrow \R^d,\qquad F^* F (x) = \int_{K} \langle x,y\rangle y d\mu(y).
\end{equation*}
If $\{e_i\}^d$ is the canonical basis for $\R^d$, then the vector valued integral yields
 \begin{equation*}
 \int_{K}  y^{(i)} y d\mu(y)  = \sum_{j=1}^d \int_{K}  y^{(i)} y^{(j)} d\mu(y) e_j,
 \end{equation*}
 where $y=(y^{(1)},\ldots,y^{(d)})^\top\in\R^d$. 
If we denote the second moments of $\mu$ by $m_{i,j}(\mu)$, i.e.,
\begin{equation*}
m_{i,j}(\mu) = \int_K x^{(i)} x^{(j)} d\mu(x),\quad \text{for $i,j=1,\ldots,d$,}
\end{equation*}
then we obtain
 \begin{equation*}
 F^* F e_i = \int_{K}  y^{(i)} y d\mu(y) =  \sum_{j=1}^d\int_{K}  y^{(i)} y^{(j)} d\mu(y) e_j = \sum_{j=1}^d m_{i,j}(\mu) e_j.
 \end{equation*}
Thus, the frame operator is the matrix of second moments. As a consequence, Lemma \ref{lemma:equivalent tight frame conditions} implies the following characterization of probabilistic tight frames:
\begin{corollary}\label{corollary:moments}
A probability measure $\mu\in \mathcal{M}(\mathcal{B},K)$ is a probabilistic tight frame for $\R^d$ if and only if its second moments satisfy 
\begin{equation}\label{eq:second eq moment}
m_{i,j}(\mu)= \frac{1}{d}\delta_{i,j}\int_K \|x\|^2d\mu(x), \quad\text{for all $i,j=1,\ldots,d$.}
\end{equation}
 \end{corollary}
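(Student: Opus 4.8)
The plan is to read off the statement directly from the computation just performed together with Lemma \ref{lemma:equivalent tight frame conditions} and Lemma \ref{lemma:frame bounds}. The key observation, already established above, is that the frame operator $S = F^*F$ acts on the canonical basis by $F^*F e_i = \sum_{j=1}^d m_{i,j}(\mu) e_j$, so that $S$ \emph{is} the symmetric matrix of second moments $\big(m_{i,j}(\mu)\big)_{i,j=1}^d$. Everything else is a translation of the matrix identity $F^*F = A\mathcal{I}_d$ into coordinates.

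For the forward implication I would assume $\mu$ is a probabilistic tight frame with bound $A$. By Lemma \ref{lemma:equivalent tight frame conditions}(ii) this means $F^*F = A\mathcal{I}_d$, and comparing the $(i,j)$ entries with the formula $F^*F e_i = \sum_{j} m_{i,j}(\mu) e_j$ gives $m_{i,j}(\mu) = A\delta_{i,j}$. Lemma \ref{lemma:frame bounds} then identifies $A = \frac{1}{d}\int_K \|x\|^2 d\mu(x)$, which is exactly \eqref{eq:second eq moment}. For the converse I would assume \eqref{eq:second eq moment} and compute $F^*F e_i = \sum_{j} m_{i,j}(\mu) e_j = \big(\frac{1}{d}\int_K \|x\|^2 d\mu(x)\big) e_i$ for every $i$, so that $F^*F = A\mathcal{I}_d$ with $A := \frac{1}{d}\int_K \|x\|^2 d\mu(x)$; by Lemma \ref{lemma:equivalent tight frame conditions}(ii), $\mu$ is then a probabilistic tight frame with bound $A$.

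The only point that needs a moment's care — and it is essentially the sole obstacle — is the positivity of the constant $A$ in the converse direction, since Definition \ref{def:prob tight frame} requires $A>0$. If $\int_K \|x\|^2 d\mu(x) = 0$, then $\|x\|^2 = 0$ for $\mu$-almost every $x$, i.e.\ $\mu$ is the Dirac mass at the origin, which is not a probabilistic frame; this degenerate case must be excluded (it does not arise, for instance, whenever $0\notin K$, in particular for probabilistic unit norm tight frames). In all other cases $A>0$ and the equivalence is exactly as claimed, so the corollary follows.
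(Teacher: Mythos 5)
Your proof is correct and follows essentially the same route as the paper, which obtains the corollary directly from the observation that $F^*F$ is the matrix of second moments together with Lemma \ref{lemma:equivalent tight frame conditions} and Lemma \ref{lemma:frame bounds}. Your additional remark about excluding the degenerate case $\int_K\|x\|^2\,d\mu(x)=0$ (i.e.\ $\mu=\delta_0$, possible only if $0\in K$) is a legitimate refinement that the paper passes over silently, but it does not change the argument.
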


\begin{remark}
Bourgain raised in \cite{Bourgain:1986aa} the following question: \emph{Is there a universal constant $c>0$ such that for any dimension $d$ and any convex body $K$ in $\R^d$ with $\vol_d(K)=1$, there exists a hyperplane $H\subset\R^d$ for which $\vol_{d-1}(K \cap H)>c$}? The positive answer to this question has become known as the hyperplane conjecture. By applying results in \cite{Milman:1987aa}, we can rephrase this conjecture by means of probabilistic tight frames: \emph{There is a universal constant $C$ such that for any convex body $K$, on which the uniform probability measure $\sigma_K$ forms a probabilistic tight frame, the probabilistic tight frame bound is less than $C$}. Due to Lemma \ref{lemma:frame bounds}, the boundedness condition is equivalent to $\int_K \|x\|^2d\sigma_K(x)\leq C d$. The hyperplane conjecture is still open, but there are large classes of convex bodies, for instance, gaussian random polytopes \cite{B.Klartag:2009aa}, for which an affirmative answer has been established. 
\end{remark}

Let us further investigate the uniform probability measure:
\begin{proposition}\label{proposition:uniform distribution}
The uniform probability measure $\sigma_r$ on the sphere of radius $r>0$ is a probabilistic tight frame. 
\end{proposition}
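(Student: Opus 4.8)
The plan is to reduce everything to Corollary \ref{corollary:moments}: it suffices to show that the second moments of $\sigma_r$ satisfy \eqref{eq:second eq moment}, i.e.
\[
m_{i,j}(\sigma_r)=\tfrac{1}{d}\,\delta_{i,j}\int_{S^{d-1}_r}\|x\|^2\,d\sigma_r(x),
\]
where $S^{d-1}_r=\{x\in\R^d:\|x\|=r\}$. The single ingredient I would use throughout is the defining symmetry of $\sigma_r$: it is invariant under every orthogonal transformation $O\in O(d)$, i.e.\ $\sigma_r\circ O=\sigma_r$. Finiteness of all the integrals involved is automatic since $S^{d-1}_r$ is compact.

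First I would dispatch the off-diagonal moments. Fix $i\neq j$ and let $R\in O(d)$ be the reflection sending $x^{(j)}\mapsto -x^{(j)}$ and fixing all other coordinates. The change of variables $y=Rx$ leaves $\sigma_r$ invariant but sends the integrand $x^{(i)}x^{(j)}$ to $-x^{(i)}x^{(j)}$, whence $m_{i,j}(\sigma_r)=-m_{i,j}(\sigma_r)$, so $m_{i,j}(\sigma_r)=0$.

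Next the diagonal moments. Every coordinate permutation is realized by an orthogonal (permutation) matrix, so $m_{i,i}(\sigma_r)=m_{1,1}(\sigma_r)=:c$ for all $i$. Summing over $i$ and using that $\sigma_r$ is supported on $S^{d-1}_r$ gives
\[
dc=\sum_{i=1}^d m_{i,i}(\sigma_r)=\int_{S^{d-1}_r}\sum_{i=1}^d\big(x^{(i)}\big)^2\,d\sigma_r(x)=\int_{S^{d-1}_r}\|x\|^2\,d\sigma_r(x)=r^2,
\]
so $c=r^2/d=\tfrac{1}{d}\int_{S^{d-1}_r}\|x\|^2\,d\sigma_r(x)$. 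Combining the two cases yields \eqref{eq:second eq moment}, and Corollary \ref{corollary:moments} shows $\sigma_r$ is a probabilistic tight frame; by Lemma \ref{lemma:frame bounds} the tight frame bound equals $A=r^2/d$.

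There is essentially no obstacle here — the only point requiring care is that the rotation invariance of $\sigma_r$ (as the normalized rotation-invariant surface measure) is taken as given. If one preferred to avoid Corollary \ref{corollary:moments} entirely, the same idea works directly: by orthogonal invariance it is enough to verify \eqref{eq:definition of probabilistic unit norm tight frame} for $x=\|x\|e_1$, which reduces the whole statement to the one computation $\int_{S^{d-1}_r}\big(y^{(1)}\big)^2\,d\sigma_r(y)=r^2/d$, i.e.\ the value $c$ above.
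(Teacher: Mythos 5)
Your proof is correct and follows essentially the same route as the paper: both reduce the claim to the second-moment criterion of Corollary \ref{corollary:moments}, kill the off-diagonal moments by a sign-flip symmetry (you phrase it as invariance under a coordinate reflection, the paper as cancellation over four symmetric pieces of the sphere), and obtain the diagonal moments $r^2/d$ by coordinate-permutation symmetry together with $\|x\|^2=r^2$ on the support. No gaps; your closing remark identifying the frame bound via Lemma \ref{lemma:frame bounds} is a harmless addition.
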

\begin{proof}
We aim to verify the conditions in Corollary \ref{corollary:moments}. First, we consider $i\neq j$: we divide the sphere $S_r^{d-1}=\{x\in\R^d : \|x\|=r\}$ into four parts,
 \begin{align*}
 P_1 & = \{x\in S_r^{d-1}:0\leq x^{(i)},x^{(j)}\leq 1\},\\
 P_2 & = \{x\in S_r^{d-1}:0\leq x^{(i)},-x^{(j)}\leq 1\},\\
 P_3 & = \{x\in S_r^{d-1}:0\leq -x^{(i)},x^{(j)}\leq 1\},\\
 P_4 & = \{x\in S_r^{d-1}:0\leq -x^{(i)},-x^{(j)}\leq 1\}.
 \end{align*}  
Due to symmetry, we obtain 
\begin{equation*}
\int_{P_1}x^{(i)}x^{(j)}d\sigma_r(x) = -\int_{P_2}x^{(i)}x^{(j)}d\sigma_r(x)=-\int_{P_3}x^{(i)}x^{(j)}d\sigma_r(x) = \int_{P_4}x^{(i)}x^{(j)}d\sigma_r(x).
\end{equation*}
Therefore, we derive
 \begin{equation*}
 \int_{S_r^{d-1}}x^{(i)}x^{(j)}d\sigma_r(x) = \sum_{k=1}^4\int_{P_k}x^{(i)}x^{(j)}d\sigma_r(x) = 0.
 \end{equation*}
 
 To tackle $i=j$, we first observe that
 \begin{equation*}
 1=\sigma_r(S_r^{d-1})=\frac{1}{r^2}\int_{S_r^{d-1}}\|x\|^2d\sigma_r(x) = \frac{1}{r^2}\sum_{i=1}^d\int_{S_r^{d-1}}x^{(i)}x^{(i)}d\sigma_r(x). 
  \end{equation*}
 Due to symmetry, the term $\int_{S_r^{d-1}}x^{(i)}x^{(i)} d\sigma_r(x)$ does not depend on the choice of $i$ and we must therefore have $\int_{S_r^{d-1}}x^{(i)}x^{(i)} d\sigma_r(x)=r^2/d$. According to Corollary \ref{corollary:moments}, $\sigma_r$ is a probabilistic tight frame for $\R^d$. 
\end{proof}
\begin{remark}\label{remark:after uniform}
The above proof primarily uses the symmetry of the sphere. Thus, Proposition \ref{proposition:uniform distribution} holds for a much larger class of uniform probability measures on symmetric sets $K$. For instance, it holds for the uniform probability measure on $B_p(r):=\{x\in\R^d : \|x\|_{\ell_p}\leq r\}$ and $\partial B_p(r)$, for $0<p\leq\infty$. 
\end{remark}

Next, we construct continuous nonuniform probability measures on the unit circle that form probabilistic unit norm tight frames. Let $\sigma\in \mathcal{M}(\mathcal{B},S^{d-1})$ represent the uniform probability measure on the circle:
\begin{proposition}\label{theorem:von Mises Fisher}
If $\{x_i\}_{i=1}^n\subset S^1$ is a FUNTF and $f:\R\rightarrow \R$ is a function, such that, for all $i=1,\ldots,n$, $y\mapsto f(\langle x_i,y\rangle )$ is measurable and $\int_{S^1} f(\langle x_i,y\rangle) d\sigma(y) =1$, then the probability measure  
\begin{equation}\label{eq:mix general}
\mu(x) = \frac{1}{n}\sum_{i=1}^n  f(\langle x_i , x\rangle) \sigma(x)
\end{equation}
is a probabilistic unit norm tight frame for $\R^2$.
\end{proposition}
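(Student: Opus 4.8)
The plan is to show that the frame operator of $\mu$ equals $\tfrac12\mathcal{I}_2$ and then appeal to Lemma \ref{lemma:equivalent tight frame conditions}. First one checks that $\mu$ really is a probability measure: its total mass is $\frac1n\sum_{i=1}^n\int_{S^1}f(\langle x_i,y\rangle)\,d\sigma(y)=\frac1n\cdot n=1$ by hypothesis (reading the hypotheses as implicitly including $f\ge 0$ so that $\mu$ is nonnegative; positivity of $f$ will not actually be needed for the tightness computation). By the computation of the frame operator preceding Corollary \ref{corollary:moments}, we have $S=\int_{S^1}yy^\top\,d\mu(y)=\frac1n\sum_{i=1}^n T_i$ with $T_i:=\int_{S^1}yy^\top f(\langle x_i,y\rangle)\,d\sigma(y)$, so it suffices to understand each $T_i$.

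The key point is to exploit the rotation invariance of $\sigma$. Fixing $i$, choose a rotation $R_i$ of $\R^2$ with $R_i e_1=x_i$; substituting $y=R_i z$ gives $T_i=R_i\big(\int_{S^1}zz^\top f(z^{(1)})\,d\sigma(z)\big)R_i^\top$, since $R_i^\top x_i=e_1$ and $\sigma$ is rotation invariant. Parametrizing $z=(\cos\psi,\sin\psi)$ and observing that $f(\cos\psi)$ is even in $\psi$, so that the off-diagonal integrand $\cos\psi\sin\psi\,f(\cos\psi)$ is odd and integrates to $0$, the bracketed matrix equals $\diag(a,b)$ with $a=\frac{1}{2\pi}\int_0^{2\pi}\cos^2\psi\,f(\cos\psi)\,d\psi$ and $b=\frac{1}{2\pi}\int_0^{2\pi}\sin^2\psi\,f(\cos\psi)\,d\psi$. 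Here $a$ and $b$ are independent of $i$, and $a+b=\int_{S^1}f(\langle x_i,y\rangle)\,d\sigma(y)=1$. Writing $x_i^\perp:=R_ie_2$, so that $x_ix_i^\top+x_i^\perp(x_i^\perp)^\top=\mathcal{I}_2$, we obtain $T_i=a\,x_ix_i^\top+b\,x_i^\perp(x_i^\perp)^\top=b\,\mathcal{I}_2+(a-b)\,x_ix_i^\top$.

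Finally, since $\{x_i\}_{i=1}^n$ is a FUNTF for $\R^2$, Lemmas \ref{lemma:equivalent tight frame conditions discrete} and \ref{lemma:FUNTF frame bound} give $\sum_{i=1}^n x_ix_i^\top=\frac n2\mathcal{I}_2$, whence
\begin{equation*}
S=\frac1n\sum_{i=1}^n T_i=b\,\mathcal{I}_2+\frac{a-b}{n}\cdot\frac n2\,\mathcal{I}_2=\frac{a+b}{2}\,\mathcal{I}_2=\frac12\,\mathcal{I}_2 .
\end{equation*}
By Lemma \ref{lemma:equivalent tight frame conditions}, $\mu$ is a probabilistic tight frame for $\R^2$; being supported on $S^1$, it is a probabilistic unit norm tight frame.

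I expect the only genuine bookkeeping to be the diagonalization of $T_i$ — specifically, making sure that the two eigenvalues $a$ (the ``radial'' one) and $b$ (the ``tangential'' one) are truly independent of $i$, which is precisely what the rotation invariance of $\sigma$ provides; once that is in hand the statement reduces to the already-established identity $\sum_i x_ix_i^\top=\frac n2\mathcal{I}_2$ for the given FUNTF. (An entirely equivalent route is to compute the second moments $m_{k,l}(\mu)$ directly via the same change of variables $\psi\mapsto\psi-\phi_i$ with $x_i=(\cos\phi_i,\sin\phi_i)$, and then invoke Corollary \ref{corollary:moments} together with $\sum_i\cos^2\phi_i=\sum_i\sin^2\phi_i=\tfrac n2$ and $\sum_i\cos\phi_i\sin\phi_i=0$.)
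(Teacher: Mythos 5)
Your proof is correct, but it is organized differently from the paper's. The paper fixes an arbitrary $x\in\R^2$, parametrizes the circle by angles, and shifts the integration variable by $\alpha_i$ (i.e.\ uses the commutativity of rotations in the plane); the tightness then follows because the rotated family $\{(\cos(\alpha_i+\beta),\sin(\alpha_i+\beta))\}_{i=1}^n$ is again a FUNTF for every $\beta$, so the inner sum is constant and the hypothesis $\int f(\cos\beta)\,d\sigma=1$ finishes the computation. You instead work at the level of the frame operator: you write $S=\frac1n\sum_i T_i$, use rotation invariance of $\sigma$ to get $T_i=R_i\,\diag(a,b)\,R_i^\top=b\,\mathcal{I}_2+(a-b)\,x_ix_i^\top$ with $a,b$ independent of $i$ and $a+b=1$, and then invoke $\sum_i x_ix_i^\top=\frac n2\mathcal{I}_2$ (Lemmas \ref{lemma:equivalent tight frame conditions discrete} and \ref{lemma:FUNTF frame bound}) and Lemma \ref{lemma:equivalent tight frame conditions}. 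Both arguments rest on the same two ingredients (rotation invariance of $\sigma$ and the FUNTF identity), and your implicit reading that $f\geq 0$ is the same convention the statement needs for $\mu$ to be a probability measure. What your route buys is noteworthy: it does not use commutativity of the rotation group at all, only that the moment matrix $\int zz^\top f(z^{(1)})\,d\sigma(z)$ is invariant under the stabilizer of $e_1$, so the conjugated matrix depends only on $x_i=R_ie_1$. Consequently your argument extends verbatim to $S^{d-1}$ for $d>2$ (with $\diag(a,b,\ldots,b)$, $a+(d-1)b=1$, and $\sum_i x_ix_i^\top=\frac nd\mathcal{I}_d$), whereas the paper explicitly remarks that its proof relies on the abelian structure of rotations in $\R^2$ and passes to group orbits (Proposition \ref{theorem:von Mises Fisher II}) to handle higher dimensions. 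So your proof is not only valid but strictly more general than the paper's for this statement.
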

\begin{proof}
Let $\{x_i\}_{i=1}^n=\big\{\big(\begin{smallmatrix}\cos(\alpha_i) \\ \sin(\alpha_i)  \end{smallmatrix}\big) : i=1,\ldots,n\big\}$, for $0\leq \alpha_1,\ldots,\alpha_n<2\pi$. Since, for any $\beta\in[0,2\pi)$, the collection $\big\{\big(\begin{smallmatrix}\cos(\alpha_i+\beta) \\ \sin(\alpha_i+\beta)  \end{smallmatrix}\big) : i=1,\ldots,n\big\}$ is a rotation of $\{x_i\}_{i=1}^n$, it also forms a FUNTF, for all $0\leq \beta \leq 2\pi$. If we parametrize the circle by $[0,2\pi)$, then the mixture in \eqref{eq:mix general} can be carried over to $[0,2\pi)$ and may be written as 
\begin{equation*}
\frac{1}{n} \sum_{i=1}^n f({ \cos(\beta-\alpha_i)}) d\beta,\quad \beta\in [0,2\pi),
\end{equation*}
where we have used $x_i=\big(\begin{smallmatrix} \cos(\alpha_i) \\ \sin(\alpha_i)\end{smallmatrix}\big)$ and $\cos(\beta)\cos(\alpha_i)+\sin(\beta)\sin(\alpha_i)=\cos(\beta-\alpha_i)$. This yields, for any $x\in\R^2$,
\begin{align*}
\int_{S^1}  |\langle y, x\rangle |^2 d\mu(x)
& = 
\int_{0}^{2\pi}  \big|\big\langle x, \big(\begin{smallmatrix} \cos(\beta)\\ \sin(\beta)\end{smallmatrix}\big)\big\rangle \big|^2 \frac{1}{n}\sum_{i=1}^n  f({\cos(\beta-\alpha_i))}d\beta \\
& =
\frac{1}{n}\sum_{i=1}^n \int_{0}^{2\pi}  \big|\big\langle x, \big(\begin{smallmatrix} \cos(\alpha_i+\beta)\\ \sin(\alpha_i+\beta)\end{smallmatrix}\big)\big\rangle \big|^2 f({\cos(\alpha_i+\beta-\alpha_i)})d\beta\\
& = 
 \frac{1}{n}\int_{0}^{2\pi} f({\cos(\beta)}) \sum_{i=1}^n \big|\big\langle x, \big(\begin{smallmatrix} \cos(\alpha_i+\beta)\\ \sin(\alpha_i+\beta)\end{smallmatrix}\big)\big\rangle\big |^2 d\beta\\
&=
\frac{1}{n}\int_{0}^{2\pi}  f({\cos(\beta)}) \frac{n}{2}d\beta =\frac{1}{2}.\qedhere
\end{align*}
\end{proof}

Next, we give an example of Proposition \ref{theorem:von Mises Fisher} that is used in \cite{Ehler:2010ac} to model the patterns found in granular rod experiments: 
\begin{example}
Let $x_0\in S^{1}$ and $\kappa>0$. For the density $f_1(t)=\frac{1}{c_1}\exp(\kappa t)$, we call 
\begin{equation*}
\mu_1(x) = f_1(\langle x_0, x\rangle)\sigma(x)
\end{equation*}
the \emph{von Mises measure}, which reflects the normal distribution on the circle, see \cite{Mardia:2008aa}. The constant $c$ normalizes $\mu$ such that $\mu(S^{1})=1$. The \emph{Watson measure} $\mu\in\mathcal{M}(\mathcal{B},S^{1})$ is given by
\begin{equation*}
\mu_2(x) = f_2(\langle x_0, x\rangle^2)\sigma(x),
\end{equation*}
where $f_2(t)=\frac{1}{c_2}\exp(\kappa t^2)$, and $c_2$ is a normalizing constant, cf.~\cite{Mardia:2008aa}. For $\kappa > 0$, the density of the Watson measure tends
to concentrate around $\pm x_0$, whereas for $\kappa < 0$, the density concentrates around the great circle orthogonal to $x_0$. And as  $|\kappa|$ increases, the density peaks tighten. 

Watson and von Mises measures are widely used in directional statistics. Both densities $f_1$ and $f_2$ satisfy the assumptions of Proposition \ref{theorem:von Mises Fisher}. Therefore, FUNTF mixtures of von Mises and Watson measures according to \eqref{eq:mix general} form probabilistic unit norm tight frames for $\R^2$. 
\end{example}

The proof of Proposition \ref{theorem:von Mises Fisher} implicitly relies on the commutativity of the rotation group in $\R^2$. The special group in $\R^d$, for $d>2$, is not abelian, and we need slightly stronger assumptions. Let $G$ be a finite subgroup of the orthogonal matrices $O(\R^d)$. The \emph{$G$-orbit} of $x\in\R^d$ is the collection $\{ gx : g\in G\}$. The finite subgroup $G$ is called \emph{irreducible} if the $G$-orbit of any nonzero $x\in\R^d$ spans $\R^d$.  If $G\subset O(\R^d)$ is an irreducible finite group, then the $G$-orbit of any nonzero $x\in\R^d$ is a finite tight frame for $\R^d$, cf.~\cite{Vale:2005aa}. The latter can be used to verify that the $n$-th roots of unity, vertices of the platonic solids, and vertices of the truncated icosahedron are finite tight frames, cf.~\cite{Vale:2005aa}. This construction can also be applied to probability distributions:
\begin{proposition}\label{theorem:von Mises Fisher II}
Let $G$ be a finite irreducible subgroup of $O(\R^d)$ and $x_0\in S^{d-1}$. If $\mu\in \mathcal{M}(\mathcal{B},S^{d-1})$, then the probability measure  
\begin{equation*}
\tilde{\mu}(x) = \frac{1}{|G|}\sum_{g\in G} \mu(g^*x)
\end{equation*}
is a probabilistic unit norm tight frame for $\R^d$.
\end{proposition}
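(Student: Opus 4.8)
The plan is to reduce the claim, via Lemma~\ref{lemma:equivalent tight frame conditions}, to showing that the frame operator of $\tilde\mu$ is a positive multiple of $\mathcal{I}_d$, and then to extract this from the irreducibility of $G$. First I would verify that $\tilde\mu$ really lies in $\mathcal{M}(\mathcal{B},S^{d-1})$: every $g\in G\subset O(\R^d)$ maps $S^{d-1}$ bijectively onto itself, so each assignment $B\mapsto\mu(g^*B)$ is a probability measure on $S^{d-1}$, and $\tilde\mu$ is the arithmetic mean of $|G|$ of these, hence again a probability measure on $S^{d-1}$.

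Next I would compute the frame operator $S_{\tilde\mu}x=\int_{S^{d-1}}\langle x,y\rangle y\,d\tilde\mu(y)$. Splitting the average and using $\int f(y)\,d(\mu\circ g^*)(y)=\int f(gy)\,d\mu(y)$ together with the identity $\langle x,gy\rangle gy=g\big(\langle g^{-1}x,y\rangle y\big)$ (valid because $g^*=g^{-1}$), one obtains $S_{\tilde\mu}=\frac{1}{|G|}\sum_{g\in G}g\,S_\mu\,g^{-1}$, where $S_\mu$ is the frame operator (matrix of second moments) of $\mu$. Reindexing the sum by $g\mapsto h^{-1}g$ then shows $hS_{\tilde\mu}h^{-1}=S_{\tilde\mu}$ for every $h\in G$, i.e.\ $S_{\tilde\mu}$ commutes with all of $G$.

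Now $S_{\tilde\mu}$ is symmetric and positive semidefinite, being an average of orthogonal conjugates of the symmetric positive semidefinite $S_\mu$; hence it is orthogonally diagonalizable with real eigenvalues, and each of its eigenspaces is $G$-invariant because $S_{\tilde\mu}$ commutes with $G$. Since $G$ is irreducible, every eigenspace is either $\{0\}$ or all of $\R^d$. Moreover $S_{\tilde\mu}$ is positive definite: if $\langle S_{\tilde\mu}x,x\rangle=\int|\langle x,y\rangle|^2 d\tilde\mu(y)=0$, then $x\perp\supp(\tilde\mu)$, and $\supp(\tilde\mu)=\bigcup_{g\in G}g\big(\supp(\mu)\big)$ contains the $G$-orbit of any $x_1\in\supp(\mu)\subset S^{d-1}$, which spans $\R^d$, forcing $x=0$. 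Therefore $S_{\tilde\mu}$ has a single positive eigenvalue $A$, so $S_{\tilde\mu}=A\mathcal{I}_d$, and Lemma~\ref{lemma:equivalent tight frame conditions} yields that $\tilde\mu$ is a probabilistic tight frame; being supported on $S^{d-1}$ it is a probabilistic unit norm tight frame. One may additionally record $A=\tfrac1d$, either from Lemma~\ref{lemma:frame bounds} or from $\trace S_{\tilde\mu}=\trace S_\mu=\int\|y\|^2 d\mu(y)=1$.

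The step I expect to be the main obstacle is the passage from ``$S_{\tilde\mu}$ commutes with $G$'' to ``$S_{\tilde\mu}$ is scalar'': over $\R$ the commutant of an irreducible representation need not consist only of scalars, so a naive appeal to Schur's lemma would be incomplete. I would sidestep this entirely by using that $S_{\tilde\mu}$ is self-adjoint, so that the eigenspace argument above applies verbatim and no delicate real form of Schur's lemma is required.
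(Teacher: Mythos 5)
Your argument is correct, but it follows a genuinely different route from the paper. The paper's proof is a one-line computation under the integral: it fixes $x\in S^{d-1}$, invokes the Vale--Waldron result \cite{Vale:2005aa} that the $G$-orbit $\{gx\}_{g\in G}$ of any nonzero vector is a finite (unit norm) tight frame, so that $\frac{1}{|G|}\sum_{g\in G}|\langle y,gx\rangle|^2=\frac{1}{d}\|y\|^2$ pointwise, and then integrates against $\mu$ after the change of variables $d\mu(g^*x)\mapsto d\mu(x)$. You instead work at the level of the frame operator: you derive $S_{\tilde\mu}=\frac{1}{|G|}\sum_{g\in G}gS_\mu g^{-1}$, observe by reindexing that $S_{\tilde\mu}$ commutes with $G$, and conclude scalarity from self-adjointness plus irreducibility via the eigenspace argument, finishing with Lemma~\ref{lemma:equivalent tight frame conditions} and the trace normalization $A=\frac1d$ (consistent with Lemma~\ref{lemma:frame bounds}). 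Your flagging of the real Schur's lemma issue is apt, and your self-adjoint workaround disposes of it cleanly; the positive-definiteness/support step is actually dispensable, since once $S_{\tilde\mu}=\lambda\mathcal{I}_d$ the trace identity $\trace S_{\tilde\mu}=\int_{S^{d-1}}\|y\|^2\,d\tilde\mu(y)=1$ already forces $\lambda=\frac1d>0$. What each approach buys: the paper's proof is shorter because it outsources the irreducibility argument to the cited discrete orbit result, whereas yours is self-contained and in effect reproves (a measure-theoretic generalization of) that result, making the averaging-over-$G$ mechanism explicit at the operator level.
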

\begin{proof}
Since $\{ gx : g\in G\}$ is a finite tight frame, we obtain
\begin{align*}
\int_{S^{d-1}} |\langle y,x\rangle |^2 d \tilde{\mu} (x) & = \frac{1}{|G|}\sum_{g\in G} \int_{S^{d-1}} |\langle y, x\rangle |^2 d\mu (g^* x) \\
& = \frac{1}{|G|}\sum_{g\in G} \int_{S^{d-1}} |\langle y, g x\rangle |^2 d\mu (x) \\
& = \int_{S^{d-1}} \frac{1}{d}\|y\|^2  d\mu (x)=\frac{1}{d}\|y\|^2.\qedhere
\end{align*}
\end{proof}

\subsection{Random Tight Frames for $\R^d$}\label{section:sampling}
The following theorem is the main result of the present paper. Compared to Theorem \ref{theorem:Goyal}, we can replace the uniform distribution with any probabilistic tight frame and the points do not have to be identically distributed. 
To properly formulate the result, let us recall some notation that we already used in Theorem \ref{theorem:Goyal}. We define $E(Z):=\int_{K} Z(x) d\mu(x)$, where $Z:K\rightarrow \R^{p\times q}$ is a random matrix/vector that is distributed according to $\mu\in\mathcal{M}(\mathcal{B},K)$. For notational convenience, we write $Z\in K$ if $Z$ maps into $K$:
\begin{theorem}\label{theorem:final main result}
Let $\{X_{k}\}_{k=1}^n\subset K$ be a collection of random vectors, independently distributed according to probabilistic tight frames $\{\mu_{k}\}_{k=1}^n\subset \mathcal{M}(\mathcal{B},K)$, respectively, whose $4$-th moments are finite, i.e., $N_k:= \int_{K} \|y\|^4 d\mu_{k}(y)<\infty$. If $F$ denotes the random matrix associated to the analysis operator of $\{X_{k}\}_{k=1}^n$, then we have
\begin{equation}\label{eq:theorem final}
E(\|\frac{1}{n}F^* F-\frac{L}{d}\mathcal{I}_d\|_\mathcal{F}^2) =  \frac{1}{n}\big(N-\frac{\tilde{L}}{d}\big),
\end{equation}
where $L:=\frac{1}{n}\sum_{k=1}^n L_{k}$, $\tilde{L}:=\frac{1}{n}\sum_{k=1}^n L_{k}^2$, $L_{k}:=\int_K \|y\|^2d\mu_{k}(y)$, and $N=\frac{1}{n}\sum_{k=1}^n N_k$.
\end{theorem}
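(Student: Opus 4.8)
The plan is to compute the expectation $E(\|\frac1n F^*F - \frac Ld \mathcal{I}_d\|_\mathcal{F}^2)$ entrywise, exploiting the fact that $F^*F$ is the (random) matrix of second moments of the sampled points and that its expectation is exactly $\frac Ld\mathcal{I}_d$. First I would write $F^*F = \sum_{k=1}^n X_k X_k^\top$ (an outer-product sum), so that the $(i,j)$ entry of $\frac1n F^*F$ is $\frac1n\sum_{k=1}^n X_k^{(i)}X_k^{(j)}$. Since each $\mu_k$ is a probabilistic tight frame, Corollary \ref{corollary:moments} gives $E(X_k^{(i)}X_k^{(j)}) = m_{i,j}(\mu_k) = \frac1d\delta_{i,j}L_k$, hence $E(\frac1n F^*F) = \frac1d\big(\frac1n\sum_k L_k\big)\mathcal{I}_d = \frac Ld\mathcal{I}_d$. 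Therefore the quantity in question is exactly the variance (sum over $i,j$ of the variances of the entries):
\begin{equation*}
E\big(\|\tfrac1n F^*F - \tfrac Ld\mathcal{I}_d\|_\mathcal{F}^2\big) = \sum_{i=1}^d\sum_{j=1}^d \Var\Big(\tfrac1n\sum_{k=1}^n X_k^{(i)}X_k^{(j)}\Big).
\end{equation*}

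Next I would use independence of the $X_k$ to split each variance as $\frac1{n^2}\sum_{k=1}^n \Var\big(X_k^{(i)}X_k^{(j)}\big)$ (cross terms vanish), and then sum over $i,j$:
\begin{equation*}
E\big(\|\tfrac1n F^*F - \tfrac Ld\mathcal{I}_d\|_\mathcal{F}^2\big) = \frac1{n^2}\sum_{k=1}^n\sum_{i,j=1}^d \Big( E\big((X_k^{(i)}X_k^{(j)})^2\big) - m_{i,j}(\mu_k)^2\Big).
\end{equation*}
For fixed $k$ the double sum splits cleanly. The subtracted part is $\sum_{i,j} m_{i,j}(\mu_k)^2 = \sum_{i,j}\frac1{d^2}\delta_{i,j}L_k^2 = \frac{L_k^2}{d}$, using Corollary \ref{corollary:moments} again. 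The first part is $\sum_{i,j} E\big((X_k^{(i)})^2(X_k^{(j)})^2\big) = E\big(\sum_i (X_k^{(i)})^2 \sum_j (X_k^{(j)})^2\big) = E(\|X_k\|^4) = N_k$, which is finite by hypothesis. Combining, the $k$-th summand equals $N_k - \frac{L_k^2}{d}$, and summing gives $\frac1{n^2}\sum_{k=1}^n(N_k - \frac{L_k^2}{d}) = \frac1n\big(N - \frac{\tilde L}{d}\big)$ with $N = \frac1n\sum_k N_k$ and $\tilde L = \frac1n\sum_k L_k^2$, which is precisely \eqref{eq:theorem final}.

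There is no serious obstacle here; the argument is essentially a variance computation organized around the observation that the frame operator is the empirical second-moment matrix with the correct (diagonal) mean. The only points requiring a little care are: (i) justifying the interchange of expectation and the finite sums (trivial, everything is a finite sum, and the $4$-th moment hypothesis guarantees integrability so that $\Var(X_k^{(i)}X_k^{(j)})<\infty$ by Cauchy--Schwarz); (ii) correctly identifying the Frobenius norm squared of the difference with the sum of entrywise variances, which hinges on $E(\frac1n F^*F) = \frac Ld\mathcal{I}_d$ — note this is where the tight-frame assumption on \emph{each} $\mu_k$ is essential, not merely on some average; and (iii) the two bookkeeping identities $\sum_{i,j}E\big((X_k^{(i)})^2(X_k^{(j)})^2\big) = E(\|X_k\|^4)$ and $\sum_{i,j}m_{i,j}(\mu_k)^2 = L_k^2/d$, both of which follow by factoring the double sum and applying Corollary \ref{corollary:moments}. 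Finally, I would remark that Theorem \ref{theorem:Goyal} is recovered by taking all $\mu_k$ equal to the uniform measure on $S^{d-1}$, for which $L_k = 1$ and $N_k = 1$, so the right-hand side becomes $\frac1n(1 - \frac1d)\to 0$.
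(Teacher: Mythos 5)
Your proposal is correct and follows essentially the same route as the paper: an entrywise computation of $E\big(((\tfrac1n F^*F)_{i,j}-\tfrac Ld\delta_{i,j})^2\big)$ using independence, the second-moment characterization of probabilistic tight frames (Corollary \ref{corollary:moments}), and the identity $\sum_{i,j}\int_K |y^{(i)}|^2|y^{(j)}|^2\,d\mu_k(y)=N_k$. The only cosmetic difference is that you package the expansion of the square as a variance-of-independent-sums identity, whereas the paper expands it directly; the intermediate per-entry formula and the final bookkeeping are identical.
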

Note that Tyler used FUNTFs to derive $M$-estimators of multivariate scatter in \cite{Kent:1988kx,Tyler:1987fk,Tyler:1987uq}. Those results are related to the estimation of the population covariance matrix from the sample covariance, and the latter is closely related to Theorem \ref{theorem:final main result}.  
\begin{proof}
We observe that the $(i,j)$-th entry of the random matrix operator $F^*F$ is given by
\begin{equation*}
(F^*F)_{i,j} = \sum_{k=1}^n X^{(i)}_{k} X^{(j)}_{k},
\end{equation*}
where $X_{k}=(X^{(1)}_{k},\ldots,X^{(d)}_{k})^\top$. First, we fix $(i,j)$ and derive
\begin{align}
E(((\frac{1}{n}F^*F)_{i,j}-\frac{L}{d}\delta_{i,j} )^2 ) & = E(\frac{1}{n^2}\sum_{k,l} X^{(i)}_{k} X^{(j)}_{k}X^{(i)}_{l}X^{(j)}_{l} - \frac{2L}{d}\delta_{i,j}  \frac{1}{n}\sum_{k=1}^n X^{(i)}_{k} X^{(j)}_{k}+\frac{L^2}{d^2}\delta_{i,j} )\nonumber\\
& = \frac{1}{n^2}\sum_{k=1}^n E(X^{(i)}_{k} X^{(j)}_{k}X^{(i)}_{k}X^{(j)}_{k}) +\frac{1}{n^2}\sum_{k\neq l}E(X^{(i)}_{k} X^{(j)}_{k}X^{(i)}_{l}X^{(j)}_{l} )\nonumber\\
& \qquad -\frac{2L}{d}\delta_{i,j}\frac{1}{n}\sum_{k=1}^n E(X^{(i)}_{k} X^{(j)}_{k}) + \frac{L^2}{d^2}\delta_{i,j}.\nonumber
\end{align}
Let us denote $M_k(i,j):=\int_K |y^{(i)}|^2 |y^{(j)}|^2d\mu_k(y)$ and $M=\frac{1}{n}\sum_{k=1}^n M_k$. Since the random vectors are independent and the measures $\{\mu_{k}\}_{k=1}^n$ satisfy \eqref{eq:second eq moment}, we obtain
\begin{align}
E(((\frac{1}{n}F^*F)_{i,j}-\frac{L}{d}\delta_{i,j} )^2 ) & = \frac{1}{n^2}\sum_{k=1}^n M_k(i,j) +\frac{1}{n^2}\sum_{k\neq l} \frac{L_{k}}{d} \frac{L_{l}}{d}\delta_{i,j}  -\frac{2L}{d}\frac{L}{d}   +\frac{L^2}{d^2}\delta_{i,j}\nonumber \\
& = \frac{1}{n^2}\sum_{k=1}^n M_k(i,j) + \frac{1}{n d^2}\sum_{k=1}^n L_{k}(L-\frac{1}{n}L_{k}) \delta_{i,j} -\frac{L^2}{d^2}\delta_{i,j}\nonumber \\
& = \frac{1}{n^2}\sum_{k=1}^n M_k(i,j) + \frac{L^2}{d^2}\delta_{i,j}-\frac{\tilde{L}}{n d^2}\delta_{i,j} -\frac{L^2}{d^2}\delta_{i,j}\nonumber \\
& = \frac{1}{n^2}\sum_{k=1}^n M_k(i,j) - \frac{\tilde{L}}{n d^2}\delta_{i,j} \nonumber\\
& = \frac{1}{n}(M_{i,j} - \frac{\tilde{L}}{d^2}\delta_{i,j}). \label{eq:convergence rate}
\end{align}
The Frobenius norm $\|A\|_{\mathcal{F}}$ of a matrix $A=(a_{i,j})_{i,j}$ equals $\big(\sum_{i,j}a^2_{i,j}\big)^{1/2}$. Since $\frac{1}{n}F^* F-\frac{L}{d}\mathcal{I}_d$ is a $d\times d$ matrix, we obtain 
\begin{equation*}
E(\|\frac{1}{n}F^* F-\frac{L}{d}\mathcal{I}_d\|_\mathcal{F}^2) = \frac{1}{n}(\frac{1}{n}\sum_{k=1}^n \int_K \|y\|^4 d\mu_k(y) - \frac{\tilde{L}}{d}). \qedhere
\end{equation*}
\end{proof}
If the $N_k$ in Theorem \ref{theorem:final main result} are bounded by a universal constant, then \eqref{eq:theorem final} essentially decays as $\frac{1}{n}$. The smaller $N$ the faster tends \eqref{eq:theorem final} to zero. In other words, the $4$-th moments specify the exact decay. 

Let us present few examples that lead to asymptotic tight frames:
\begin{example}
We have already pointed out in Remark \ref{remark:after uniform} that uniform probability measures on $\ell_p$-balls and $\ell_p$-spheres, for $0<p\leq \infty$,  form probabilistic tight frames. According to Theorem \ref{theorem:final main result}, i.i.d.~random points according to the latter distributions approximate a tight frame. 
\end{example}

\begin{remark}
Vershynin has derived a result about the approximation of covariance matrices that is similar to Theorem \ref{theorem:final main result}. His statement is about convergence with high probability in the operator norm. The approximation error is then estimated by a constant times $(\frac{1}{n})^{1/2-2/q}$, where all $\{\mu_{k}\}_{k=1}^n$ must have finite $q$-th moments and $q>4$, cf.~Theorem 6.1 in \cite{Vershynin:2010aa}. For sub-Gaussian distributions, i.e., for $\mu$ such that, for some $s>0$, 
\begin{equation*}
\mu(|\langle X,x\rangle|>t)\leq 2e^{-\frac{t^2}{s^2}},\quad\text{for $t>0$ and $x\in S^{d-1}$,}
\end{equation*}
where $X$ is distributed according to $\mu$, Vershynin can estimate the approximation error in the operator norm by a constant times $(\frac{1}{n})^{1/2}$, cf.~Proposition 2.1 in \cite{Vershynin:2010aa}. Note that the latter matches our decay rates for the mean squared error (we squared the Frobenius norm). Nevertheless, our results address more general distributions since Theorem \ref{theorem:final main result} only requires that the $4$-th moments exist. We do not have any assumption on higher moments, and we do not require that the distributions are sub-Gaussian. 
\end{remark}

For probabilistic unit norm tight frames, Theorem \ref{theorem:final main result} simplifies as follows:
\begin{corollary}\label{corollary:final main result}
Let $\{X_{k}\}_{k=1}^n\subset S^{d-1}$ be a collection of random vectors, independently distributed according to probabilistic unit norm tight frames $\{\mu_{k}\}_{k=1}^n\subset \mathcal{M}(\mathcal{B},S^{d-1})$, respectively. If $F$ denotes the random matrix associated to the analysis operator of $\{X_{k}\}_{k=1}^n$, then 
\begin{equation}\label{eq:in last theorem}
E(\|\frac{1}{n}F^* F-\frac{1}{d}\mathcal{I}_d\|_\mathcal{F}^2) =  \frac{1}{n}\big( 1-\frac{1}{d}\big).
\end{equation}
\end{corollary}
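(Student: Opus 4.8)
The plan is to derive this as an immediate specialization of Theorem \ref{theorem:final main result}. First I would observe that the hypothesis $\mu_k\in\mathcal{M}(\mathcal{B},S^{d-1})$ forces $\|y\|=1$ for $\mu_k$-almost every $y$, so that all the moment quantities appearing in Theorem \ref{theorem:final main result} become trivial: $L_k=\int_{S^{d-1}}\|y\|^2d\mu_k(y)=1$ and $N_k=\int_{S^{d-1}}\|y\|^4d\mu_k(y)=1$ for every $k$. In particular the $4$-th moments are finite (they equal $1$), so Theorem \ref{theorem:final main result} is indeed applicable.

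Next I would simply average these to get $L=\frac{1}{n}\sum_{k=1}^n L_k=1$, $\tilde L=\frac{1}{n}\sum_{k=1}^n L_k^2=1$, and $N=\frac{1}{n}\sum_{k=1}^n N_k=1$. Substituting $L=1$, $\tilde L=1$, $N=1$ into the conclusion \eqref{eq:theorem final} of Theorem \ref{theorem:final main result} yields
\begin{equation*}
E\big(\|\tfrac{1}{n}F^*F-\tfrac{1}{d}\mathcal{I}_d\|_\mathcal{F}^2\big)=\frac{1}{n}\Big(1-\frac{1}{d}\Big),
\end{equation*}
which is exactly \eqref{eq:in last theorem}. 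The only point worth spelling out is that here $L/d=1/d$, so the matrix whose mean squared Frobenius distance from $\frac{1}{d}\mathcal{I}_d$ is being measured is literally $\frac{1}{n}F^*F$, matching the statement.

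There is essentially no obstacle: the entire content of the corollary is that the normalization constants $L_k$ and $N_k$ collapse to $1$ on the sphere, which is what distinguishes probabilistic unit norm tight frames from general probabilistic tight frames. If one wished to avoid invoking Theorem \ref{theorem:final main result} as a black box, one could alternatively repeat its proof verbatim with $\|X_k\|\equiv 1$; the per-entry computation \eqref{eq:convergence rate} then simplifies directly since $\sum_{k}M_k(i,j)\le\sum_k E(\|X_k\|^4)=n$ and summing over $i,j$ gives $\frac{1}{n}\sum_{i,j}M_{i,j}=\frac{1}{n}E(\|X_1\|^4\cdots)$-type terms collapsing to $1$, with the $\delta_{i,j}$ terms contributing $-\frac{1}{nd}$. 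But the cleanest route is the one-line specialization above.
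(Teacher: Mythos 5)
Your proposal is correct and coincides with the paper's own (implicit) argument: the paper states the corollary as a direct specialization of Theorem \ref{theorem:final main result}, and on $S^{d-1}$ one indeed has $L_k=N_k=1$ for all $k$, hence $L=\tilde L=N=1$ and \eqref{eq:theorem final} reduces to $\frac{1}{n}\bigl(1-\frac{1}{d}\bigr)$.
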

Randomness is used in compressed sensing to design suitable measurements matrices. Each row of such random matrices is a random vector whose covariance must usually be close to the identity matrix. The construction of  random vectors in compressed sensing is commonly based on Bernoulli, Gaussian, and sub-Gaussian distributions. We shall explain that these random vectors are induced by probabilistic tight frames, and in fact, we can apply Theorem \ref{theorem:final main result}: 
\begin{example}\label{example:labeling}
Let $\{X_{k}\}_{k=1}^n$ be a collection of $d$-dimensional random vectors such that each vector's entries are i.i.d~according to a probability measure with zero mean and finite $4$-th moments. This implies that each $X_{k}$ is distributed with respect to a probabilistic tight frame whose $4$-th moments exist. Thus, the assumptions in Theorem \ref{theorem:final main result} are satisfied, and we can compute \eqref{eq:theorem final} for some specific distributions that are related to compressed sensing:
\begin{itemize}
\item If the entries of $X_{k}$, $k=1,\ldots,n$, are i.i.d.~according to a Bernoulli distribution that takes the values $\pm \frac{1}{\sqrt{d}}$ with probability $\frac{1}{2}$, then $X_{k}$ is distributed according to a normalized counting measure supported on the vertices of the $d$-dimensional hypercube. Thus, $X_{k}$ is distributed according to a probabilistic unit norm tight frame for $\R^d$, cf.~Remark \ref{remark:after uniform}, and Corollary \ref{corollary:final main result} can be applied.
\item If the entries of $X_{k}$, $k=1,\ldots,n$, are i.i.d.~according to a Gaussian distribution with $0$ mean and variance $\frac{1}{\sqrt{d}}$, then $X_{k}$ is distributed according to a multivariate Gaussian probability measure $\mu\in\mathcal{M}(\mathcal{B},\R^d)$ whose covariance matrix is $\frac{1}{d}\mathcal{I}_d$, and $\mu$ forms a probabilistic tight frame for $\R^d$. Since the moments of a multivariate Gaussian random vector are well-known, we can explicitly compute $N=1+\frac{2}{d}$, $L=1$, and $\tilde{L}=1$ in Theorem \ref{theorem:final main result}. Thus, the right-hand side of \eqref{eq:theorem final} equals $\frac{1}{n}(1+\frac{1}{d})$. 
\item If the entries of $X_{k}$, $k=1,\ldots,n$, are i.i.d.~with respect to a sub-Gaussian probability measure with $0$ mean, then $X_k$ is distributed according to a probabilistic tight frame for $\R^d$ that has finite moments, and Theorem \ref{theorem:final main result} can be applied.

\end{itemize}
\end{example}





\begin{remark}
When compressed sensing is applied to MRI, the rows of the discrete Fourier matrix $W=\big(\frac{\omega^{j k}}{\sqrt{d}}\big)_{j,k=0}^{d-1}$, where $\omega=e^{\frac{-2\pi \textnormal{i}}{d}}$ and $\textnormal{i}^2=-1$, are usually subsampled to reduce acquisition time. A uniform subsampling of the discrete Fourier matrix is induced by a (complex) probabilistic tight frame: The entire machinery of probabilistic frames for $\R^d$ developed in Section \ref{section:probabilistic unit norm tight frames} can be extended to probabilistic frames for $\C^d$ in a straight-forward manner. Synthesis, analysis, and frame operator can be analogously defined, and a probability measure $\mu$ on $K\subset \C^d$ is then a probabilistic tight frame for $\C^d$ if and only if its ``second moments'' satisfy
\begin{equation*}
\int_K z^{(i)} \overline{z^{(j)}} d\mu(z) = \frac{1}{d} \delta_{i,j}\int_K \|z\|^2 d\mu(z).
\end{equation*} 
Let $\{Z_{k}\}_{k=1}^n$ be a collection of random vectors that are i.i.d.~according to a normalized counting measure $\mu$ supported on the row vectors of the discrete Fourier matrix. Since $W$ is unitary and the absolute value of each entry is $\frac{1}{\sqrt{d}}$, the latter measure is a probabilistic tight frame for $\C^d$, and its ``$4$-th moments'' satisfy $\int_K |z^{(i)}|^2 |z^{(j)}|^2 d\mu(z) = \frac{1}{d^2}$. Corollary \ref{corollary:final main result} can also be extended to probabilistic tight frames for $\C^d$. 
\end{remark}

We conclude this section by rephrasing Theorem \ref{theorem:final main result} in terms of general probability distributions on $K\subset \R^d$ that are not necessarily tight frames. For a matrix  $U=(u_{i,j})\in \R^{d\times d}$, we denote $\|U\|_1 := \sum_{i,j} | u_{i,j} | $:
\begin{theorem}\label{theorem:last results}
Let $\{X_{k}\}_{k=1}^n\subset K$ be a collection of random vectors that are independently distributed according to probability measures $\{\mu_{k}\}_{k=1}^n\subset \mathcal{M}(\mathcal{B},K)$, respectively, whose $4$-th moments are finite, i.e., $N_k:= \int_{K} \|y\|^4 d\mu_{k}(y)<\infty$. Let $\{S_{k}\}_{k=1}^n$ be the frame operators of $\{\mu_{k}\}_{k=1}^n$, respectively. If $F$ denotes the random matrix associated to the analysis operator of $\{X_{k}\}_{k=1}^n$, then we have
\begin{equation*}
E(\|\frac{1}{n}F^* F-S\|_\mathcal{F}^2) =  \frac{1}{n}\big(N-\frac{\|\tilde{S}\|_1}{d^2}\big),
\end{equation*}
where $S=\frac{1}{n}\sum_{k=1}^n S_{k}$, $\tilde{S}_{i,j}=\frac{1}{n}\sum_{k=1}^n ((S_{k})_{i,j})^2$, and $N=\frac{1}{n}\sum_{k=1}^n N_k$. 
\end{theorem}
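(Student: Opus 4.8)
The plan is to mirror the computation in the proof of Theorem~\ref{theorem:final main result}, replacing the hypothesis~\eqref{eq:second eq moment} (which held there because each $\mu_k$ was a probabilistic tight frame) by the general fact that the frame operator $S_k$ of $\mu_k$ has entries $(S_k)_{i,j} = m_{i,j}(\mu_k) = \int_K y^{(i)} y^{(j)} d\mu_k(y)$. First I would fix a pair $(i,j)$ and expand
\begin{equation*}
E\Big(\big((\tfrac{1}{n}F^*F)_{i,j} - S_{i,j}\big)^2\Big),
\end{equation*}
where now $S_{i,j} = \frac{1}{n}\sum_{k=1}^n (S_k)_{i,j}$. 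Writing $(F^*F)_{i,j} = \sum_k X^{(i)}_k X^{(j)}_k$ and squaring, the expectation splits into the diagonal sum $\frac{1}{n^2}\sum_k E\big((X^{(i)}_k)^2 (X^{(j)}_k)^2\big)$, the off-diagonal sum $\frac{1}{n^2}\sum_{k\neq l} E(X^{(i)}_k X^{(j)}_k)\,E(X^{(i)}_l X^{(j)}_l)$ (using independence), the cross term $-\frac{2}{n} S_{i,j}\cdot\frac{1}{n}\sum_k E(X^{(i)}_k X^{(j)}_k)$, and the constant $S_{i,j}^2$.

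Next I would substitute $E(X^{(i)}_k X^{(j)}_k) = (S_k)_{i,j}$ and $E\big((X^{(i)}_k)^2 (X^{(j)}_k)^2\big) = M_k(i,j) := \int_K |y^{(i)}|^2 |y^{(j)}|^2 d\mu_k(y)$. Exactly as in the earlier proof, the off-diagonal sum equals $\frac{1}{n^2}\big(\big(\sum_k (S_k)_{i,j}\big)^2 - \sum_k (S_k)_{i,j}^2\big) = S_{i,j}^2 - \frac{1}{n}\tilde S_{i,j}$, the cross term equals $-2 S_{i,j}^2$, and these combine with the $+S_{i,j}^2$ to leave $-\frac{1}{n}\tilde S_{i,j}$. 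Hence
\begin{equation*}
E\Big(\big((\tfrac{1}{n}F^*F)_{i,j} - S_{i,j}\big)^2\Big) = \frac{1}{n}\Big(M_{i,j} - \frac{\tilde S_{i,j}}{n}\Big)\cdot n \;=\; \frac{1}{n}\Big(\frac{1}{n}\sum_{k=1}^n M_k(i,j)\Big) - \frac{\tilde S_{i,j}}{n},
\end{equation*}
i.e.\ $\frac{1}{n}\big(M_{i,j} - \tilde S_{i,j}\big)$ with $M_{i,j} = \frac{1}{n}\sum_k M_k(i,j)$; this is the precise analogue of~\eqref{eq:convergence rate} with $\frac{\tilde L}{d^2}\delta_{i,j}$ replaced by $\tilde S_{i,j}$.

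Finally I would sum over all $d^2$ pairs $(i,j)$, using $\|A\|_\mathcal{F}^2 = \sum_{i,j} a_{i,j}^2$. The term $\sum_{i,j} M_{i,j} = \frac{1}{n}\sum_k \sum_{i,j}\int_K |y^{(i)}|^2|y^{(j)}|^2 d\mu_k(y) = \frac{1}{n}\sum_k \int_K \|y\|^4 d\mu_k(y) = N$, since $\sum_{i,j}|y^{(i)}|^2|y^{(j)}|^2 = \big(\sum_i |y^{(i)}|^2\big)^2 = \|y\|^4$; and $\sum_{i,j}\tilde S_{i,j} = \frac{1}{n}\sum_k \sum_{i,j}((S_k)_{i,j})^2 = \frac{1}{n}\sum_k \|S_k\|_\mathcal{F}^2$. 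To match the stated form $\frac{\|\tilde S\|_1}{d^2}$, note $\|\tilde S\|_1 = \sum_{i,j}|\tilde S_{i,j}| = \sum_{i,j}\tilde S_{i,j}$ because each $\tilde S_{i,j}$ is a sum of squares, hence nonnegative; one only needs to check the normalization constant works out, i.e.\ that $S = \frac{1}{n}\sum_k S_k$ really is the claimed mean frame operator and that the bookkeeping of the $\frac{1}{d^2}$ factor is consistent with how $\tilde S$ and $\|\cdot\|_1$ are defined in the statement. The main obstacle is not conceptual but purely a matter of tracking the normalization constants carefully across the two averaging operations (over $k$ and over $(i,j)$); the finiteness of the fourth moments $N_k$ is exactly what is needed to justify the interchange of expectation and the finite sums and to guarantee all the quantities appearing are finite, so no extra analytic input beyond that hypothesis is required.
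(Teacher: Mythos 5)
Your route is exactly the paper's: the paper proves this theorem only by the remark that one repeats the proof of Theorem \ref{theorem:final main result} with $\frac{L_k}{d}\delta_{i,j}$ replaced by $(S_k)_{i,j}$, and your entrywise computation does precisely that. Using independence and $E(X_k^{(i)}X_k^{(j)})=(S_k)_{i,j}$, your bookkeeping correctly yields
\begin{equation*}
E\Big(\big((\tfrac1n F^*F)_{i,j}-S_{i,j}\big)^2\Big)=\tfrac1n\big(M_{i,j}-\tilde S_{i,j}\big),
\end{equation*}
which is the right analogue of \eqref{eq:convergence rate} (the intermediate display ``$\frac1n(M_{i,j}-\frac{\tilde S_{i,j}}{n})\cdot n$'' in your writeup is garbled, but the endpoint is correct).

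The genuine issue is your last step, which you defer as ``checking that the normalization constant works out'': it does not, and this is not harmless bookkeeping. Summing your identity over the $d^2$ pairs $(i,j)$ gives
\begin{equation*}
E\big(\|\tfrac1n F^*F-S\|_{\mathcal F}^2\big)=\tfrac1n\big(N-\|\tilde S\|_1\big),
\end{equation*}
with no factor $\frac{1}{d^2}$, since with the definitions in the statement $\sum_{i,j}\tilde S_{i,j}=\|\tilde S\|_1=\frac1n\sum_{k=1}^n\|S_k\|_{\mathcal F}^2$. No choice of normalization recovers the printed $\frac{\|\tilde S\|_1}{d^2}$: in the tight-frame case $(S_k)_{i,j}=\frac{L_k}{d}\delta_{i,j}$ one has $\|\tilde S\|_1=\frac{\tilde L}{d}$, so your formula reduces exactly to \eqref{eq:theorem final}, whereas the printed right-hand side would give $\frac1n\big(N-\frac{\tilde L}{d^3}\big)$, contradicting Theorem \ref{theorem:final main result}. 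So the correct conclusion of your argument is that the statement as printed carries a spurious factor $d^2$ (apparently a slip propagated from the $\frac{\tilde L}{d^2}\delta_{i,j}$ term in \eqref{eq:convergence rate}); you should state your derived formula and flag this discrepancy explicitly, because as written your proposal neither proves the printed identity nor identifies why it cannot hold.
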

For instance, Theorem \ref{theorem:last results} applies to random vectors that have a multivariate sub-Gaussian distribution and whose entries are not  necessarily independent. The proof can be derived by following the lines of the proof of Theorem \ref{theorem:final main result} while replacing $\frac{L_{k}}{d}$ with $S_{k}$.

\section{The Probabilistic Frame Potential}\label{section:random points}

\subsection{Minimizing the Probabilistic Frame Potential}\label{section:introducing PFP}
The minimizers of the frame potential are the configurations of $n$ points on the sphere that form a FUNTF. What happens if we have to distribute a continuous mass on the sphere $S^{d-1}$ or, more general, on $K\subset \R^d$?

\begin{definition}
For $0\not\in K$ and $\mu\in\mathcal{M}(\mathcal{B},K)$, 
we call 
\begin{equation}\label{eq:pfp in def}
\PFP(\mu) = \frac{\int_{K}\int_{K} |\langle x,y\rangle|^2 d\mu(x)d\mu(y)}{\big(\int_K \|x\|^2 d\mu(x)\big)^2}
\end{equation}
the \emph{probabilistic frame potential} of $\mu$. 
\end{definition}
We easily observe that $\supp(\mu)\neq \{0\}$ if and only if $\int_K \|x\|^2 d\mu(x) \neq 0$. Therefore, $\PFP(\mu)$ in \eqref{eq:pfp in def} is well-defined. 
We aim to characterize the minimizers of the probabilistic frame potential for fixed $K$. In fact, these minimizers are the probabilistic tight frames provided that the latter exist for the particular choice of $K$. The following theorem generalizes Theorem \ref{theorem:Waldron}:
\begin{theorem}\label{theorem:probabilistic frame potential}
If $0\not\in K$ and $\mu\in \mathcal{M}(\mathcal{B},K)$, then 
\begin{equation}\label{eq:estimate porb pot}
\PFP(\mu)\geq \frac{1}{d},
\end{equation}
and equality holds if and only if $\mu$ is a probabilistic tight frame for $\R^d$.
\end{theorem}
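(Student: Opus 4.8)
The plan is to express the probabilistic frame potential entirely through the frame operator $S = F^*F$ and then reduce \eqref{eq:estimate porb pot} to an elementary eigenvalue inequality. Recall from the computation preceding Corollary \ref{corollary:moments} that $S$ is the real symmetric $d\times d$ matrix of second moments, $S_{i,j} = m_{i,j}(\mu) = \int_K x^{(i)} x^{(j)}\,d\mu(x)$, and that $S$ is positive semidefinite since $\langle Sx,x\rangle = \int_K |\langle x,y\rangle|^2\,d\mu(y) \ge 0$ for every $x\in\R^d$. I would work under the standing assumption $\int_K \|x\|^2\,d\mu(x) < \infty$ --- the natural setting in which $S$ and $\PFP(\mu)$ are meaningfully defined --- and note that then the Cauchy--Schwarz inequality $|\langle x,y\rangle|^2 \le \|x\|^2\|y\|^2$ makes the numerator in \eqref{eq:pfp in def} finite as well and legitimizes the interchange of integration used below.

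Expanding $|\langle x,y\rangle|^2 = \sum_{i,j=1}^d x^{(i)} x^{(j)} y^{(i)} y^{(j)}$ and integrating termwise (Fubini) gives
\[
\int_K\int_K |\langle x,y\rangle|^2\,d\mu(x)\,d\mu(y) = \sum_{i,j=1}^d m_{i,j}(\mu)^2 = \|S\|_{\mathcal F}^2 = \trace(S^2),
\]
the last equality because $S = S^\top$. Likewise $\int_K \|x\|^2\,d\mu(x) = \sum_{i=1}^d m_{i,i}(\mu) = \trace(S)$, so that
\[
\PFP(\mu) = \frac{\trace(S^2)}{\big(\trace S\big)^2}.
\]
Writing $\lambda_1,\dots,\lambda_d \ge 0$ for the eigenvalues of $S$, this reads $\PFP(\mu) = \big(\sum_k \lambda_k^2\big)\big/\big(\sum_k \lambda_k\big)^2$, with $\sum_k \lambda_k = \trace(S) = \int_K \|x\|^2\,d\mu(x) > 0$ because $0\notin K$. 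The Cauchy--Schwarz inequality $\big(\sum_{k=1}^d \lambda_k\big)^2 \le d\sum_{k=1}^d \lambda_k^2$ then yields $\PFP(\mu) \ge 1/d$ at once.

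Finally, equality in this Cauchy--Schwarz step holds if and only if $\lambda_1 = \dots = \lambda_d$, i.e. $S = A\,\mathcal I_d$ with $A = \trace(S)/d > 0$; by the equivalence (i)$\Leftrightarrow$(ii) of Lemma \ref{lemma:equivalent tight frame conditions}, $S = F^*F = A\,\mathcal I_d$ is exactly the statement that $\mu$ is a probabilistic tight frame for $\R^d$, with bound $A = \frac1d\int_K \|x\|^2\,d\mu(x)$ in agreement with Lemma \ref{lemma:frame bounds}. I do not anticipate any real obstacle here: the substance is the identification $\PFP(\mu) = \trace(S^2)/(\trace S)^2$ together with a one-line eigenvalue inequality, and the only thing requiring care is the measure-theoretic bookkeeping (finiteness of moments, Fubini) flagged above.
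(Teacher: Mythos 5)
Your proof is correct and follows essentially the same route as the paper: both reduce $\PFP(\mu)$ to the matrix of second moments, obtaining $\PFP(\mu)=\sum_{i,j}m_{i,j}(\mu)^2/\bigl(\sum_i m_{i,i}(\mu)\bigr)^2=\trace(S^2)/(\trace S)^2$, apply Cauchy--Schwarz, and identify the equality case with $S=A\mathcal{I}_d$ via Corollary \ref{corollary:moments} and Lemma \ref{lemma:equivalent tight frame conditions}. The only cosmetic difference is that you diagonalize $S$ and compare eigenvalues, whereas the paper argues entrywise (H\"older on the diagonal moments plus vanishing of the off-diagonal ones for equality); your explicit attention to finiteness of the second moment and the Fubini step is a small point of rigor the paper leaves implicit.
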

\begin{proof}
Let $m_{i,j}(\mu)$ denote the second moments of $\mu$, i.e., 
$
 m_{i,j}(\mu) = \int_{K} x^{(i)}x^{(j)} d\mu(x)$.  
We obtain
 \begin{equation}\label{eq:first constraint}
 \int_{K} \|x\|^2 d\mu(x) =   \sum_{i=1}^d  \int_{K} x^{(i)}x^{(i)} d\mu(x) = \sum_{i=1}^d m_{i,i}(\mu).
 \end{equation}
 The probabilistic frame potential can be written as
 \begin{align*}
 \PFP(\mu)& =  \frac{\int_{K}\int_{K}\sum_{i=1}^d\sum_{j=1}^d  x^{(i)}y^{(i)} x^{(j)} y^{(j)} d\mu(x)d\mu(y)}{\sum_{i=1}^d m_{i,i}(\mu)}\\
  & =\frac{\sum_{i=1}^d\sum_{j=1}^d m^2_{i,j}(\mu)}{\sum_{i=1}^d m_{i,i}(\mu)}.
 \end{align*}
The H{\"o}lder inequality implies
\begin{equation}\label{eq:proof estimate prop pot}
 \sum_{i=1}^d m_{i,i}(\mu) \leq \big(\sum_{i=1}^d m^2_{i,i}(\mu)\big)^{1/2} \big(\sum_{i=1}^d 1\big)^{1/2}\leq \big(\sum_{i=1}^d \sum_{j=1}^d m^2_{i,j}(\mu)\big)^{1/2} d^{1/2},
\end{equation}
which yields \eqref{eq:estimate porb pot}.

Next, assume that the latter inequalities \eqref{eq:proof estimate prop pot}, in fact, are equalities. This requires $m_{i,j}(\mu)=0$, for all $i\neq j$, and the H{\"o}lder inequality was actually an equality. The H{\"o}lder inequality becomes an equality if and only if the occurring sequences are linearly dependent. Thus, $(m_{i,i}(\mu))_{i=1}^d$ must be a multiple of the constant sequence. Due to \eqref{eq:first constraint}, we obtain $m_{i,i}(\mu)=\frac{1}{d} \int_{K} \|x\|^2 d\mu(x)$, for all $i=1,\ldots,d$, and hence $\mu$ is a probabilistic tight frame, cf.~Corollary \ref{corollary:moments}.  

Conversely, if $\mu$ is a probabilistic tight frame, then $m_{i,j}(\mu)=\delta_{i,j}\frac{1}{d} \int_{K} \|x\|^2 d\mu(x)$ due to Corollary \ref{corollary:moments}. Thus, we have equality in \eqref{eq:proof estimate prop pot} and hence in \eqref{eq:estimate porb pot}.
 \end{proof}
 
According to Proposition \ref{proposition:uniform distribution}, probabilistic tight frames exist for $K=S^{d-1}$. If $K=\R^d\setminus \{0\}$, then the normalized counting measure of any finite tight frame is a probabilistic tight frame. Hence, Theorem \ref{theorem:probabilistic frame potential} leads to the following generalization of Theorem \ref{theorem:Benedetto Fickus} and Theorem \ref{theorem:Waldron}: 
 \begin{corollary}\label{corollary:min prob etc}
If $K=S^{d-1}$, then the minimizers of the probabilistic frame potential are exactly the probabilistic unit norm tight frames for $\R^d$. If $K=\R^d\setminus \{0\}$, then the minimizers of the probabilistic frame potential are exactly the probabilistic tight frames for $\R^d$.
\end{corollary}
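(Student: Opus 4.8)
The plan is to deduce the corollary directly from Theorem \ref{theorem:probabilistic frame potential}, which is already phrased as an equivalence: for every $\mu\in\mathcal{M}(\mathcal{B},K)$ one has $\PFP(\mu)\geq 1/d$, with equality exactly when $\mu$ is a probabilistic tight frame for $\R^d$. Hence, as soon as $\mathcal{M}(\mathcal{B},K)$ is known to contain at least one probabilistic tight frame, the infimum of $\PFP$ over $\mathcal{M}(\mathcal{B},K)$ equals $1/d$, this value is attained, and the set of minimizers is precisely the set of probabilistic tight frames supported in $K$. So the proof reduces, in each of the two cases, to checking that such a measure exists.

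For $K=S^{d-1}$ I would invoke Proposition \ref{proposition:uniform distribution} with $r=1$: the uniform probability measure on the unit sphere is a probabilistic tight frame, so the minimum $1/d$ is attained. By Definition \ref{def:prob tight frame}, a probabilistic tight frame with $K=S^{d-1}$ is exactly what is called a probabilistic unit norm tight frame, so the minimizers of $\PFP$ on $\mathcal{M}(\mathcal{B},S^{d-1})$ are precisely the probabilistic unit norm tight frames for $\R^d$.

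For $K=\R^d\setminus\{0\}$ I would exhibit a concrete probabilistic tight frame whose support avoids the origin: taking any finite tight frame $\{x_i\}_{i=1}^n$ for $\R^d$ consisting of pairwise distinct nonzero vectors (an orthonormal basis will do), its normalized counting measure $\frac{1}{n}\mu_{x_1,\ldots,x_n}$ lies in $\mathcal{M}(\mathcal{B},\R^d\setminus\{0\})$ and is a probabilistic tight frame for $\R^d$, as already observed after Lemma \ref{lemma:equivalent tight frame conditions}. Again the minimum $1/d$ is attained, and the minimizers of $\PFP$ on $\mathcal{M}(\mathcal{B},\R^d\setminus\{0\})$ are exactly the probabilistic tight frames for $\R^d$.

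The only real obstacle is this existence (attainment) step: Theorem \ref{theorem:probabilistic frame potential} by itself gives the lower bound $1/d$ and forbids any measure from beating it, but without producing at least one probabilistic tight frame supported in $K$ one cannot conclude that the infimum is a minimum, nor describe the minimizer set. The two cited constructions close that gap for the two choices of $K$ at hand; for a general set $K$ the corollary would only hold conditionally on the existence of a probabilistic tight frame supported in $K$.
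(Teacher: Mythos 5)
Your proposal is correct and follows the paper's own route exactly: the paper also derives the corollary from Theorem \ref{theorem:probabilistic frame potential} together with the attainment step, citing Proposition \ref{proposition:uniform distribution} for $K=S^{d-1}$ and the normalized counting measure of a finite tight frame for $K=\R^d\setminus\{0\}$. Your remark that the conclusion is conditional on the existence of a probabilistic tight frame supported in $K$ is precisely the point the paper makes in the sentences preceding the corollary.
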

 
%
%
 
Let us explore the relations between Corollary \ref{corollary:min prob etc} and the discrete frame potential in Theorem \ref{theorem:Benedetto Fickus}. For fixed $d$ and $K=S^{d-1}$, every FUNTF induces a minimizer of the probabilistic frame potential:
\begin{example}
If $\{x_i\}_{i=1}^n\subset S^{d-1}$ is a FUNTF, then $\FP(\{x_i\}_{i=1}^n)=\frac{n^2}{d}$ according to Theorem \ref{theorem:Benedetto Fickus}. Thus, the discrete point measure $\frac{1}{n}\mu_{x_1,\ldots,x_n}$ satisfies $\PFP(\frac{1}{n}\mu_{x_1,\ldots,x_n})= \frac{1}{d}$, and therefore is a minimizer of the probabilistic frame potential for $K=S^{d-1}$. 
\end{example}
Contrary to Theorem \ref{theorem:Benedetto Fickus}, orthonormal systems that are not a basis, do not induce a minimizer:
\begin{example}
Let $\{x_i\}_{i=1}^n\subset S^{d-1}$ be an orthonormal system with $n<d$. Due to Theorem \ref{theorem:Benedetto Fickus}, we have $\FP(\{x_i\}_{i=1}^n)=n$. For $K=S^{d-1}$, this implies $\PFP(\frac{1}{n}\mu_{x_1,\ldots,x_n}) =\frac{n}{n^2}$. Since $n<d$, we deduce $\PFP(\frac{1}{n}\mu_{x_1,\ldots,x_n}) = \frac{1}{n}>\frac{1}{d}$.
\end{example}

\subsection{Relations to Spherical $t$-designs}\label{section:spherical 2-designs}
Let $\sigma$ denote the uniform probability measure on $S^{d-1}$. A \emph{spherical $t$-design} is a finite subset $\{x_i\}_{i=1}^n\subset S^{d-1}$,
such that,
\begin{equation*}
\frac{1}{n}\sum_{i=1}^n h(x_i) = \int_{S^{d-1}} h(x)d\sigma(x),
\end{equation*}
for all homogeneous polynomials $h$ of total degree less than or equal to $t$ in $d$ variables, cf.~\cite{Delsarte:1977aa}. We call a probability measure $\mu\in\mathcal{M}(\mathcal{B},S^{d-1})$ a \emph{probabilistic spherical $t$-design} if
\begin{equation}\label{eq:prob spherical design}
\int_{S^{d-1}}h(x)d\mu(x) = \int_{S^{d-1}} h(x)d\sigma(x),
\end{equation}
for all homogeneous polynomials $h$ with total degree less than or equal to $t$.

\begin{theorem}\label{theorem spherical and FP}
If $\mu \in\mathcal{M}(\mathcal{B},S^{d-1})$, then the following are equivalent:
\begin{itemize}
\item[\textnormal{(i)}] $\mu$ is a probabilistic spherical $2$-design.
\item[\textnormal{(ii)}] $\mu$ minimizes
\begin{equation}\label{mixed potential}
\frac{\int_{S^{d-1}} \int_{S^{d-1}} |\langle x,y\rangle |^2 d\mu(x) d\mu(y)}{\int_{S^{d-1}} \int_{S^{d-1}}  \|x-y\|^2 d\mu(x)d\mu(y)}
\end{equation}
among all probability measures $\mathcal{M}(\mathcal{B},S^{d-1})$. 
\item[\textnormal{(iii)}] $\mu$ satisfies
\begin{align}
 \int_{S^{d-1}}x d\mu(x) \label{mean condition} & = 0\\
\int_{S^{d-1}} x^{(i)} x^{(j)} d\mu(x) & = \frac{1}{d}\delta_{i,j} \label{diagonal condition}.
\end{align}
\end{itemize}
In particular, if $\mu$ is a probabilistic unit norm tight frame, then $\nu(A):=\frac{1}{2}(\mu(A)+\mu(-A))$, for $A\in\mathcal{B}$, defines a probabilistic spherical $2$-design. 
\end{theorem}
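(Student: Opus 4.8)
The plan is to prove the cycle of equivalences by establishing (i) $\Leftrightarrow$ (iii) and (ii) $\Leftrightarrow$ (iii) separately, each reduced to the first and second moments of $\mu$, and then to read off the displayed consequence from (iii). Throughout I would use that on $S^{d-1}$ one has $\int_{S^{d-1}}\|x\|^2 d\mu(x)=1$, and I would write $m_{i,j}(\mu)=\int_{S^{d-1}} x^{(i)}x^{(j)}d\mu(x)$ for the second moments.

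For (i) $\Leftrightarrow$ (iii): the space spanned by the homogeneous polynomials of degree at most $2$ in $d$ variables is spanned by $1$, the coordinate functions $x^{(i)}$, and the products $x^{(i)}x^{(j)}$. Since $\mu$ and $\sigma$ are probability measures, \eqref{eq:prob spherical design} for $t=2$ holds if and only if $\int_{S^{d-1}} x^{(i)}d\mu = \int_{S^{d-1}} x^{(i)}d\sigma$ and $m_{i,j}(\mu)=m_{i,j}(\sigma)$ for all $i,j$. The symmetry argument used in the proof of Proposition \ref{proposition:uniform distribution} (with $r=1$) gives $\int_{S^{d-1}} x^{(i)}d\sigma=0$ and $m_{i,j}(\sigma)=\frac1d\delta_{i,j}$, so these are exactly \eqref{mean condition} and \eqref{diagonal condition}.

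For (ii) $\Leftrightarrow$ (iii): expanding $|\langle x,y\rangle|^2=\sum_{i,j}x^{(i)}y^{(i)}x^{(j)}y^{(j)}$ shows, exactly as in the proof of Theorem \ref{theorem:probabilistic frame potential}, that the numerator of \eqref{mixed potential} equals $\sum_{i,j}m_{i,j}(\mu)^2$; and since $\|x\|=\|y\|=1$ on $S^{d-1}$, expanding $\|x-y\|^2=2-2\langle x,y\rangle$ shows the denominator equals $2\bigl(1-\|\int_{S^{d-1}}x\,d\mu(x)\|^2\bigr)$. Since $\sum_i m_{i,i}(\mu)=1$, the H\"older estimate \eqref{eq:proof estimate prop pot} yields $\sum_{i,j}m_{i,j}(\mu)^2\ge\frac1d$ with equality iff $m_{i,j}(\mu)=\frac1d\delta_{i,j}$; and $\|\int_{S^{d-1}}x\,d\mu(x)\|\le1$ gives the denominator $\le 2$ with equality iff $\int_{S^{d-1}}x\,d\mu(x)=0$. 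Hence \eqref{mixed potential} $\ge\frac{1}{2d}$; this bound is attained, for instance by $\sigma$, which satisfies (iii) by the previous step, so it is a genuine minimum; and a measure attains it if and only if it attains both the numerator bound and the denominator bound, i.e.\ if and only if it satisfies \eqref{mean condition} and \eqref{diagonal condition}. Thus the minimizers of \eqref{mixed potential} are exactly the measures satisfying (iii). Finally, if $\mu$ is a probabilistic unit norm tight frame, then Corollary \ref{corollary:moments} gives $m_{i,j}(\mu)=\frac1d\delta_{i,j}$; for $\nu(A)=\frac12(\mu(A)+\mu(-A))$ the invariance under $x\mapsto-x$ yields $\int_{S^{d-1}}x^{(i)}x^{(j)}d\nu(x)=m_{i,j}(\mu)=\frac1d\delta_{i,j}$ and $\int_{S^{d-1}}x\,d\nu(x)=0$, so $\nu$ satisfies (iii) and is therefore a probabilistic spherical $2$-design.

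The step I expect to need the most care is the equality analysis in (ii) $\Leftrightarrow$ (iii): one must justify that minimizing the \emph{ratio} \eqref{mixed potential} coincides with simultaneously minimizing its numerator and maximizing its denominator. This holds here precisely because a single measure, any one satisfying (iii), realizes both extremes at once, so $\frac{1}{2d}$ is attained and the minimizers of the ratio are exactly the common solutions of the two separate extremal problems; without the existence of such a simultaneous optimizer this reduction would fail.
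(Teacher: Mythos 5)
Your proof is correct and follows essentially the same route as the paper: (i)$\Leftrightarrow$(iii) by matching first and second moments with those of $\sigma$, and (ii)$\Leftrightarrow$(iii) by splitting the ratio \eqref{mixed potential} into minimizing the numerator (via the H\"older/frame-potential bound) and maximizing the denominator (zero mean), justified by exhibiting a measure attaining both extremes simultaneously. The only cosmetic difference is that you use $\sigma$ itself as the simultaneous optimizer where the paper uses the symmetrization $\nu$ of a probabilistic unit norm tight frame; both are valid.
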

\begin{proof}
To show that (i) and (iii) are equivalent, we observe that the uniform probability measure $\sigma$ is a probabilistic unit norm tight frame, cf.~Proposition \ref{proposition:uniform distribution}. It hence satisfies \eqref{diagonal condition} according to Corollary \ref{corollary:moments}. Due to its symmetry, $\sigma$ also satisfies \eqref{mean condition}. Thus according to \eqref{eq:prob spherical design}, the probabilistic spherical $2$-designs are exactly those probability measures $\mu\in\mathcal{M}(\mathcal{B},S^{d-1})$ that satisfy \eqref{mean condition} and \eqref{diagonal condition}.

To address the equivalence between (ii) and (iii), we will observe that the minimization \eqref{mixed potential} splits into minimizing its numerator and maximizing its denominator. Due to Corollary \ref{corollary:moments} and Corollary \ref{corollary:min prob etc}, the numerator is minimized if and only if $\mu$ satisfies \eqref{diagonal condition}. Let us rewrite the denominator as follows:
 \begin{align*}
  \int_{S^{d-1}} \int_{S^{d-1}} \|x-y\|^2 d\mu(x)d\mu(y) & =  \int_{S^{d-1}} \int_{S^{d-1}}\sum_{i=1}^d x^{(i)}x^{(i)}+y^{(i)}y^{(i)}-2x^{(i)}y^{(i)} d\mu(x)d\mu(y) \\
  & = \int_{S^{d-1}} \int_{S^{d-1}}2 d\mu(x)d\mu(y)-2\sum_{i=1}^d \int_{S^{d-1}} \int_{S^{d-1}}x^{(i)}y^{(i)}d\mu(x)d\mu(y)\\
  & = 2-2\sum_{i=1}^d \Big( \int_{S^{d-1}} x^{(i)} d\mu(x)\Big)^2.
 \end{align*}
It is hence maximized if and only if $\int_{S^{d-1}} xd\mu(x) = 0$. Thus, (iii) implies (ii). For the reverse implication, we need to verify that there is a probability measure that minimizes the numerator and maximizes the denominator of \eqref{mixed potential} at the same time. We first recall that  probabilistic unit norm tight frames exist, cf.~Proposition \ref{proposition:uniform distribution}. If $\mu$ is such a probabilistic unit norm tight frame, then $\nu$ as defined in Theorem \ref{theorem spherical and FP} satisfies \eqref{mean condition}, and $\nu$ also satisfies \eqref{diagonal condition} since its second moments coincide with those of $\mu$. Hence, (ii) implies (iii), and we can conclude the proof. 
\end{proof}

\begin{remark}
We have shown in the proof of Theorem \ref{theorem spherical and FP} that the maximizers of $ \int_{S^{d-1}} \int_{S^{d-1}} \|x-y\|^2 d\mu(x)d\mu(y)$ are exactly the zero mean probability measures on the sphere. The latter result is already implicitly contained in a work by Bjoerck \cite{Bjoerck:1955aa}, in which he considers the integrals over the unit ball and then shows that the mass of the maximizer must completely be contained in the unit sphere.  
\end{remark}

\section{Conclusions}\label{section:conclusions}
First, we introduced probabilistic frames and verified that many properties from finite frames can be adopted. Secondly, we used probabilistic tight frames to significantly improve a result by Goyal, Vetterli, and Thao in \cite{Goyal:1998aa} about the random choice of points on the sphere. We still approximate a tight frame while allowing for a much wider class of probability measures, namely any probabilistic tight frame. The requirement of identical distributions is also removed. We also verified that many random matrices, which are used in compressed sensing, are induced by probabilistic tight frames. Thirdly, we extended results about the frame potential as introduced by Benedetto and Fickus in \cite{Benedetto:2003aa}. In fact, we demonstrated that probabilistic tight frames are the minimizers of the probabilistic frame potential.

\section*{Acknowledgements}
The author was supported by the Intramural Research Program of the National Institute of Child Health and Human Development and by NIH/DFG Research Career Transition Awards Program (EH 405/1-1/575910).  
%

\providecommand{\bysame}{\leavevmode\hbox to3em{\hrulefill}\thinspace}
\providecommand{\MR}{\relax\ifhmode\unskip\space\fi MR }
\providecommand{\MRhref}[2]{%
  \href{http://www.ams.org/mathscinet-getitem?mr=#1}{#2}
}
\providecommand{\href}[2]{#2}

 
\end{document}